\numberwithin{equation}{section}
\theoremstyle{plain}
\theoremstyle{plain}
\newtheorem{lemme}{Lemma}[section]
\newtheorem{proposition}{Proposition}[section]
\newtheorem{theoreme}{Theorem}[section]
\newtheorem{corollaire}{Corollary}[section]
\newenvironment{hypo}{
    \Needspace*{3\baselineskip}%
    \hypoth
}{\endhypoth}
\theoremstyle{remark}
\newtheorem{remarque}{Remark}[section]
\newtheorem{definition}{Definition}[section]
\newcommand{\R}{\mathbbm{R}}
\newcommand{\N}{\mathbbm{N}}
\newcommand{\J}{\mathrm{J}}
\renewcommand{\H}{\mathrm{H}}
\newcommand{\E}{\mathrm{E}}
\renewcommand{\P}{\mathrm{P}}
\renewcommand{\L}{\mathrm{L}}
\newcommand{\M}{\mathrm{M}}
\renewcommand{\d}{\mathrm{d}}
\newcommand{\trace}{\mathrm{trace}}
\newcommand{\bigo}{\mathcal{O}}
\begin{document}

\begin{frontmatter}
\title{Asymptotically efficient prediction \\for LAN families}
\runtitle{Asymptotically efficient prediction}

\begin{aug}
\author{\fnms{Emmanuel} \snm{Onzon}\ead[label=e1]{emmanuel.onzon@univ-lyon1.fr}},

\runauthor{E. Onzon}

\affiliation{Université Lyon 1}

\address{Bâtiment Polytech Lyon,\\ 15 Boulevard André Latarget,\\
 69100 Villeurbanne, France\\
\printead{e1}\\
\phantom{E-mail:\ }}

\end{aug}

\begin{abstract}
\quad In a previous paper (\cite{bosq2012}) we did a first generalization of the concept of asymptotic efficiency for statistical prediction, \emph{i.e.} for the problems where the unknown quantity to infer is not deterministic but random. 
However, in some instances, the assumptions we made were not easy to verify. Here we give proofs of similar results based on quite a different set of assumptions. The model is required to be a LAN family, which allows to use the convolution theorem of H\'ajek and Le Cam. The results are applied to the forecasting of a bivariate Ornstein-Uhlenbeck process, for which the assumptions of (\cite{bosq2012}) are tricky to verify.
\end{abstract}

\begin{keyword}[class=MSC]
\kwd{62M20}
\kwd{62F12}
\kwd{62J02}
\end{keyword}

\begin{keyword}
\kwd{asymptotic efficiency}
\kwd{prediction}
\kwd{regression}
\kwd{Ornstein-Uhlenbeck process}
\end{keyword}

\end{frontmatter}


\section{Introduction}

\subsection{Background}

The theory of statistical prediction is an extension of the theory of point estimation where the unknown quantity to infer from the observation is not deterministic but random. It develops by generalizing the results of point estimation to prediction problems (see for instance \cite{yatracos1992}, \cite{bosq-blanke2007} and \cite{bosq2007}). For instance some authors have studied the extension of the Cramér-Rao inequality to the case of statistical prediction problems (\cite{yatracos1992}, \cite{miyata2001}, \cite{nayak2002}, \cite{onzon2011}). 
In the univariate case and for an unbiased predictor $p(X)$ it has the form
\begin{equation}\label{CR_ineq_pred}
\E_\theta (p(X) - r(X,\theta))^2 \geqslant \frac{\big( \E_\theta (\partial_\theta r(X,\theta)) \big)^2}{I(\theta)},
\end{equation}
where $I(\theta)$ is the Fisher information pertaining to the observation $X$, and $r(X,\theta)$ is the conditional expectation of the unobserved variable $Y$ to predict given the observation $X$,
\[
r(X,\theta) = \E_\theta [ Y | X ].
\]
The Cramér-Rao inequality gives a lower bound on the mean squared error of estimation which gives an optimality criterion with which to compare the performance of a given estimator. 
One limitations of this lower bound, however, is the fact that it depends on the bias of the estimator which is applied to. This means that the optimality criterion it provides only allows to compare estimators with the same bias. For this reason, the Cramér-Rao inequality is mainly useful for unbiased estimation.
Another limitation is the fact that it is not necessarily attained by any estimator at all, and an estimator which risk does not attains the bound might still be the optimal one.

The approach that was developped to overcome these limitations, consists in studying the limit of the (normalized) error of estimation when the sample size tends to infinity. This work culminated with the celebrated H\'ajek--Le Cam convolution theorem and the H\'ajek--Le Cam minimax inequality (\cite{hajek1970}, \cite{hajek1972}, \cite{lecam1972}).

The two limitations of the Cramér-Rao inequality discussed above also hold for the extension of the inequality to prediction. Moreover in some instances the lower bound given by the inequality for predictors is zero, which does not give any information at all. This is the case for example in the problem of forecasting the Ornstein-Uhlenbeck process, or more generally, as soon as $\E_\theta (\partial_\theta r(X,\theta)) = 0$. It has been seen in \cite{bosq2012} that the asymptotic approach pursued for estimation could be generalized to prediction problems under some suitable conditions. Besides, the asymptotic lower bound obtained is of the form
\[
\frac{\E_\theta \big( \partial_\theta r(X,\theta) \big)^2}{I(\theta)},
\]
instead of the form of (\ref{CR_ineq_pred}). The square being \emph{inside} the expectation, the lower bound for 
problems with $\E_\theta (\partial_\theta r(X,\theta)) = 0$ is not zero anymore. In the case of the problem of forecasting the univariate stationary Ornstein-Uhlenbeck process, \cite{bosq2012} prove that the asymptotic lower bound is attained by the plug-in predictor with the parameter $\theta$ being estimated by the maximum likelihood estimator (MLE). The results in \cite{bosq2012} are based on a version of the Cramér-Rao bound for the estimation of the regression function $x \mapsto r(x,\theta)$. This lower bound depends on the bias $b(x,\theta)$ of estimation of the regression function and its derivatives $\partial_{\theta_i} b(x,\theta)$. A consequence is that the asymptotic results require the latter to converge towards zero fast enough.
In the case of the forecasting of the univariate Ornstein-Uhlenbeck process, this condition is verified for the plug-in estimator $r(\cdot,\widehat{\theta})$ where $\widehat{\theta}$ is the MLE. This follows from results in \cite{bosq2010} that are derived from the closed form of the MLE $\widehat{\theta}$.
However, depending on the problems, the condition on the speed of convergence of the derivatives of the bias is not always easy to verify.
For instance, in the case of the bivariate Ornstein-Uhlenbeck process, no closed form is known for the MLE. Hence the results of \cite{bosq2010} cannot be generalized directly to the multivariate case. Consequently the verification of the condition on the speed of convergence of the derivatives of the bias is not easy to verify. In this article we present results that rely of a different set of assumptions which are fulfilled in the case of the forecasting of the bivariate Ornstein-Uhlenbeck process.
The asymptotic lower bounds presented here are deduced from the H\'ajek--Le Cam convolution theorem.

In the remainder of this section we set the notations that will be useful throughout the article, pose the problem we are interested in and then give a summary of the main results presented here.

\subsection{Notations}

We will use the following notations. For all $m$ and $n$ positive integers let $\M_{m,n}$ be the space of matrices with $m$ lines and $n$ columns with real coefficients. We will use the Frobenius norm on this space, which we will denote $\|\,\cdot\,\|_{\M_{m,n}}$, and is defined as follows. If $A \in \M_{m,n}$ with coefficients $a_{i,j}$, $1\leqslant i \leqslant m$, $1\leqslant j \leqslant n$ then
\[
\|A\|_{\M_{m,n}} = \sqrt{\sum_{i=1}^m \sum_{j=1}^n a_{i,j}^2} = \sqrt{\trace(A'A)} = \sqrt{ \sum_{i=1}^n \sigma_i^2 }
\]
where $\sigma_i^2$ are the eigenvalues of $A'A$ and where $A'$ is the transpose matrix of $A$. For square matrices, $\M_n = \M_{n,n}$. For a matrix $A\in \M_{m,n}$, we will note
$AA' = A^{\times 2}$.

The symbols $\J_\theta$ and $\nabla_\theta$ represent respectively, the jacobian matrix and gradient operators, with respect to the multidimensional variable $\theta\in\R^d$. The symbol $\partial_\theta$ represents the differentiation with respect to $\theta$ when $\theta$ is a real variable.

When we write an inequality between matrices, it will always refer to L\"owner partial order, \emph{i.e.} $A\leqslant B$ if and only if $A$ and $B$ are square matrices of same size and $B-A$ is a symmetric positive semidefinite matrix.


\subsection{Model and problem}\label{section_modele}

Let $(X_t,\,t \in \mathcal{T})$, where $\mathcal{T} = \N$ or $\mathcal{T} = \R_+$, be a random process taking its values in $E$ where $(E,\mathcal{B})$ is a measurable space. We will note $(X_t,\,t \geqslant 0)$ without refering to $\mathcal{T}$ thereafter. We denote its distribution by $\P_\theta$ where $\theta\in\Theta\subset\R^d$ is an unknown parameter. For all time $T\geqslant 0$, we assume that the path
\[
X_{(T)} = (X_t,\,0\leqslant t \leqslant T),
\]
is observed. Let $(Y_t,\,t \geqslant 0)$ be a random process taking its values in $\R^k$, such that for all $T\geqslant 0$, $Y_T$ is not observed at time $T$. In the application we have in mind, $Y_T = X_{T+h}$ with $h>0$.

In this section we consider the problem of estimating the following regression function,
\[
r(\cdot,\theta) : x \mapsto \E_\theta \left( Y_T | X_T = x \right),
\]
given $X_{(T)}$.
Here the function $r$ is assumed not to depend on $T$, (we did not make this restriction in \cite{bosq2012}).
We will consistently assume that for all $T\geqslant 0$ the function $(x,\theta) \mapsto r(x,\theta)$ is measurable and for all $x\in E$, the function $\theta \mapsto r(x,\theta)$ is differentiable over $\Theta$.

In the application, we have in mind that this regression function is
\[
r(\cdot,\theta) : x \mapsto \E_\theta \left( X_{T+h} | X_T = x \right).
\]

The risk we choose for this problem is the quadratic error of estimation of the regression function (QER),
\[
\rho_T(\theta) = \int_E \E_\theta\big(\widehat{r}_T(x) - r(x,\theta)\big)^{\times 2} d\mu_{\theta}(x),
\]
where we choose $\mu_{\theta}$, a measure over $(E,\mathcal{B})$, with $E$ the range of the random variable $X_T$, and $\widehat{r}_T$ an estimator of the regression function $r(\cdot,\theta)$. Oftentimes $\mu_{\theta}$ will be the distribution of $X_T$ under $\P_\theta$, and $\widehat{r}_T$ will be a plug-in estimator, \emph{i.e.} of the form $\widehat{r}_T = r(\cdot, \widehat{\theta}_T)$. Here the measure $\mu_{\theta}$ is assumed not to depend on $T$ (we did not make this restriction in \cite{bosq2012}).

We also consider the corresponding prediction problem, which consists in predicting
\[
r(X_T,\theta) = \E_\theta \left( Y_T | X_T \right),
\]
given $X_{(T)}$. In the application we have in mind this conditional expectation will be
\[
r(X_T,\theta) = \E_\theta \left( X_{T+h} | X_T \right).
\]
The risk we choose for this problem is the quadratic error of prediction (QEP),
\[
R_T(\theta) = \E_\theta \big( \widehat{r}_T(X_T) - r(X_T,\theta)\big)^{\times 2}.
\]

This prediction problem identifies with prediction of $Y_T$ when the following condition is met,
\begin{equation}\label{equa_prop_type_Markov}
\E_\theta \left( Y_T | X_{(T)} \right) = \E_\theta \left( Y_T | X_T \right).
\end{equation}
For instance, this condition is satisfied for the forecasting problem of a future value $X_{T+h}$ of a Markov process $(X_t,\,t \geqslant 0)$ observed until time $T$, taking $Y_T = X_{T+h}$.

\begin{remarque}
When the relation (\ref{equa_prop_type_Markov}) is not satisfied for the problem at hand, it might however be the case that after a recasting of the problem, it is satisfied. For example, when forecasting an AR($p$) process, the conditional expectation is
\[
\E_\theta \left( X_{T+h} | X_{(T)} \right)
= \E_\theta \left( X_{T+h} | X_T, X_{T-1}, \ldots, X_{T-p+1} \right)
\neq \E_\theta \left( X_{T+h} | X_T \right).
\]
We can however set $Z_t = (X_t, \ldots, X_{t-p+1})$ and then
\[
\E_\theta \left( Z_{T+h} | Z_{(T)} \right)
= \E_\theta \left( Z_{T+h} | Z_T \right).
\]
\end{remarque}

\begin{remarque}
Troughout the paper we will implicitly assume that the following conditions are fulfilled.
For all $x\in E$, $\theta\in \Theta$, $T>0$,
\begin{enumerate}
\item $\J_\theta r(x,\theta)$ exists,
\item $r(\cdot,\theta) \in \L^2(\mu_{\theta})$ and $\J_\theta r(\cdot,\theta) \in \L^2(\mu_{\theta})$. 
\end{enumerate}
\end{remarque}

\subsection{Summary of the results}

In Section~\ref{section_borne_LAN} we establish an asymptotic lower bound for the QER, for all $y\in\R^k$, 
\[
\varliminf_{T\to\infty} T y' \rho_T(\theta) y \geqslant
y' \int_E (\J_\theta r(x,\theta)) I(\theta)^{-1} (\J_\theta r(x,\theta))' d\mu_\theta(x) y,
\]
and deduce a definition of asymptotic efficiency for the corresponding problem of estimation of the regression function.

Under the assumption of asymptotic equivalence between the risks of both problems,
\begin{equation}\label{equa_asympt_eq}
\lim_{T\to\infty} T \rho_T(\theta)
= \lim_{T\to\infty} T R_T(\theta),
\end{equation}
an asymptotic lower bound on the QEP immediately ensues, for all $y\in\R^k$,
\[
\lim_{T\to\infty} T y' R_T(\theta) y \geqslant
y' \int_E (\J_\theta r(x,\theta)) I(\theta)^{-1} (\J_\theta r(x,\theta))' d\mu_\theta(x) y.
\]
From this bound we deduce a definition of asymptotic efficiency for the prediction problem.

In Section~\ref{chapitre_prediction_asymptotique} we give conditions allowing to verify the asymptotic equivalence (\ref{equa_asympt_eq}) in the case of plug-in predictors, \emph{i.e.} of the form $r(X_T,\widehat{\theta}_T)$, where $\widehat{\theta}_T$ is an estimator of $\theta$ based on $X_{(T)}$. Theorems~\ref{risk_equivalence_multi_d} and \ref{prop_risk_eq_bis}, proved in Section~\ref{section_asympt_eq_QEP_QER}, state conditions that imply (\ref{equa_asympt_eq}). The conditions of Theorem~\ref{prop_risk_eq_bis} are fulfilled in our example of forecasting the bivariate Ornstein-Uhlenbeck process.

Finally in Section~\ref{section_decomp_QER} we see that under some conditions the limit of the QER can be written, in the univariate and multivariate cases respectively,
\begin{align*}
\lim_{T\to\infty} T \rho_T(\theta) 
&= \int_E \big( \partial_\theta r(x ,\theta) \big)^2 d\mu_{\theta}(x) \, V(\theta),\\
\lim_{T\to\infty} T \rho_T(\theta) 
&= \int_E \big(\J_\theta r(x ,\theta)\big) V(\theta) \big(\J_\theta r(x ,\theta)\big)' d\mu_{\theta}(x),
\end{align*}
with $V(\theta) =  \lim_{T\to\infty} T\E_\theta(\widehat{\theta}_T-\theta)^{\times 2}$.
The result implies the fulfillment of Assumption~\ref{hypo_rho_T_S} which is required by Theorems~\ref{risk_equivalence_multi_d} and \ref{prop_risk_eq_bis}.

A remarkable consequence of this result is that, under its conditions, the asymptotic efficiency of a plug-in predictor $r(X_T,\widehat{\theta}_T)$ comes down to the asymptotic efficiency of the estimator $\widehat{\theta}_T$.

\section{Asymptotic efficiency for regression and prediction}\label{section_borne_LAN}

We begin by reminding the convolution theorem of H\'ajek and Le Cam and other propositions that will be useful in proving the results of the article.

\subsection{LAN families and H\'ajek--Le Cam convolution theorem}\label{section_eff_asympt_estimation}

In this section we gather definitions and results pertaining to locally asymptotically normal (LAN) families and asymptotic efficiency for reference in the remainder of the article.

References about the theory of asymptotic efficiency are the books of \cite{ibragimov1981}, \cite{lecam1986}, \cite{pfanzagl1994}, \cite{BKRW1998} and \cite{vandervaart1998}.

\begin{definition}\label{definition_LAN}
Let $\Theta\subset\R^d$ be the domain of the parameter $\theta$. A family of probability measures $(\P_\theta^T,\, \theta\in\Theta)$, indexed by $T\in\R_+$ or $T\in\N$, is said \emph{locally asymptotically normal (LAN)} at $\theta\in\Theta$, if there is a family of positive real numbers $(c_T,\,T\geqslant 0)$ such that $c_T \to 0$ as $T\to\infty$, a symmetric positive definite matrix $I(\theta)$ and a family of random vectors $(\Delta_T(\theta),\,T\geqslant 0)$, such that for all $u\in\R^d$,
\[
\log\frac{\d\P_{\theta + c_T u}^T}{\d\P_\theta^T}(X_{(T)})
= u' \Delta_T(\theta) - \frac{1}{2}  u' I(\theta) u + \varepsilon_T(X_{(T)},\theta,u),
\]
and the following convergences hold.
\begin{enumerate}
\item $\P_\theta^T \circ \Delta_T(\theta) \underset{T\to\infty}{\Longrightarrow} \mathcal{N}(0,I(\theta))$
\item $ \P_\theta-\lim_{T\to\infty} \varepsilon_T (X_{(T)}, \theta,u) = 0$.
\end{enumerate}
\end{definition}

In the previous definition we used the following notations. For all probability measures $\P$ and all random variables $X$, $\P \circ X$ represents the distribution of $X$ under $\P$. The notation $\underset{T\to\infty}{\Longrightarrow}$ is used for weak convergence (convergence in distribution).


\begin{remarque}
Numerous families of random processes are LAN. In particular, the property holds for ergodic diffusions (\cite{kutoyants2004}), jump Markov processes (\cite{hopfner1988}, \cite{hopfner1990}), and also for ARMA models (\cite{akritas1982}, \cite{Swensen1985}, \cite{kreiss1987}, \cite{hallin1995}), and many other time series models (for instances \cite{koul1997}, \cite{taniguchi2000}, \cite{drost2008}).
\end{remarque}

\begin{definition}
Assume $(\P_\theta^T,\, \theta\in\Theta)$, $T\geqslant 0$ is a LAN family. Let $\widehat{\psi}_T$, $T\geqslant 0$ be a family of estimators of $\psi(\theta)$ such that for all $T\geqslant 0$, $\widehat{\psi}_T$ is $X_{(T)}$-measurable. We say $\widehat{\psi}_T$, $T\geqslant 0$ is a \emph{family of estimators regular at} $\theta\in\Theta$, if there is a distribution $L(\theta)$ such that for all $u\in\R^d$,
\[
\P_{\theta + c_T u}^T \circ c_T^{-1}\left(\widehat{\psi}_T - \psi(\theta +  c_T u)\right) \underset{T\to\infty}{\Longrightarrow} L(\theta).
\]
\end{definition}

We state the convolution theorem proved independently by H\'ajek and Le Cam (\cite{hajek1970}, \cite{lecam1972}). This result will be useful in Section~\ref{section_borne_LAN}. We give here the statement of \cite{pfanzagl1994}.

\begin{theoreme}\label{theo_convolution}
Let $(\P_\theta^T, \theta\in\Theta)$, $T\geqslant 0$, be a LAN family, with $\Theta\in\R^d$. Let $\psi : \Theta \to \R^k$, $k\leqslant d$, be a differentiable function with Jacobian matrix $\J_\theta \psi (\theta)$ with rank $k$ for all $\theta\in\Theta$. Let $\widehat{\psi}_T$, $T\geqslant 0$, be a family of estimators regular at every $\theta\in\Theta$. Then for all $\theta\in\Theta$ there is a distribution $M(\theta)$ over $(\R^k,\mathcal{B}(\R^k))$ such that
\begin{multline}\label{convergence_produit}
\P_\theta^T \circ \left( \Delta_T(\theta), \, c_T^{-1}(\widehat{\psi}_T - \psi(\theta)) - (\J_\theta \psi(\theta)) I(\theta)^{-1} \Delta_T(\theta) \right) \\
\underset{T\to\infty}{\Longrightarrow} \mathcal{N}(0,I(\theta)) \times M(\theta).
\end{multline}
This implies
\[
L(\theta) = \mathcal{N}(0, (\J_\theta \psi(\theta)) I(\theta)^{-1} (\J_\theta \psi(\theta))') * M(\theta).
\]
\end{theoreme}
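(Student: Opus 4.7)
My strategy is to prove the joint weak convergence in~(\ref{convergence_produit}) first; the convolution formula for $L(\theta)$ then drops out by continuity. Indeed, $c_T^{-1}(\widehat{\psi}_T-\psi(\theta)) = (\J_\theta \psi(\theta)) I(\theta)^{-1} \Delta_T(\theta) + R_T$, where $R_T$ is the second coordinate in~(\ref{convergence_produit}); regularity of $\widehat{\psi}_T$ at $u=0$ identifies its weak limit as $L(\theta)$, while the joint convergence and independence asserted in~(\ref{convergence_produit}) give that same limit as the convolution $\mathcal{N}(0,(\J_\theta\psi(\theta)) I(\theta)^{-1}(\J_\theta\psi(\theta))') * M(\theta)$.

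Set $R_T := c_T^{-1}(\widehat{\psi}_T - \psi(\theta)) - (\J_\theta \psi(\theta)) I(\theta)^{-1}\Delta_T(\theta)$. The first step is tightness of the pair $(\Delta_T(\theta),R_T)$ under $\P_\theta^T$. Tightness of $\Delta_T(\theta)$ is immediate from LAN. For $R_T$, Le Cam's first lemma gives contiguity of $\P_{\theta+c_T u}^T$ to $\P_\theta^T$; under $\P_{\theta+c_T u}^T$, the regularity of $\widehat{\psi}_T$ combined with the differentiability of $\psi$ makes $c_T^{-1}(\widehat{\psi}_T-\psi(\theta))$ tight, so by contiguity it is tight under $\P_\theta^T$ as well.

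Extract a subsequence along which $(\Delta_T(\theta),R_T) \Rightarrow (\Delta^\star,R^\star)$ with joint law $Q$ under $\P_\theta^T$. The marginal $\Delta^\star$ is $\mathcal{N}(0,I(\theta))$ by LAN. To identify $Q$, apply Le Cam's third lemma: for any bounded continuous $h$ and any $u\in\R^d$,
\[
\lim_T \E_{\theta+c_Tu}^T h(\Delta_T(\theta), R_T) = \int h(\delta,r) \exp\bigl(u'\delta - \tfrac12 u' I(\theta) u\bigr) dQ(\delta,r).
\]
Conversely, the algebraic identity
\[
R_T = c_T^{-1}\bigl(\widehat{\psi}_T - \psi(\theta+c_Tu)\bigr) - (\J_\theta\psi(\theta)) I(\theta)^{-1}\bigl(\Delta_T(\theta) - I(\theta)u\bigr) + o(1),
\]
combined with regularity of $\widehat{\psi}_T$ (whose weak limit under $\P_{\theta+c_T u}^T$ is $L(\theta)$, \emph{independent of $u$}), shows that the $R_T$-marginal limit under $\P_{\theta+c_T u}^T$ also does not depend on $u$. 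Equating characteristic functions of both sides and letting $u$ range over $\R^d$ yields a constraint strong enough to force $Q$ to factorize as $\mathcal{N}(0,I(\theta))\otimes M(\theta)$ for a uniquely determined law $M(\theta)$ on $\R^k$ (uniquely determined because $\hat{M}(s) = \hat{L}(s) \exp(\tfrac12 s'(\J_\theta\psi(\theta)) I(\theta)^{-1}(\J_\theta\psi(\theta))' s)$). Every subsequential limit is therefore the same, so the full sequence converges, giving~(\ref{convergence_produit}).

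The main obstacle is this factorization step: $\Delta^\star$ and $R^\star$ must be shown independent using nothing more than a family of characteristic-function identities indexed by $u\in\R^d$. This is the classical H\'ajek--Le Cam argument and is where the regularity assumption (providing a $u$-independent limit $L(\theta)$) combines with Le Cam's third lemma to its full effect. The full-rank hypothesis on $\J_\theta\psi(\theta)$ plays no role in the independence statement; it enters only at the very end to ensure that the Gaussian factor of $L(\theta)$ is nondegenerate.
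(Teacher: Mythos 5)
The paper itself offers no proof of this theorem: it is quoted as a known result, with the sentence preceding it stating ``We give here the statement of \cite{pfanzagl1994}''. So there is nothing in the paper to compare your argument against; what you have written is a reconstruction of the classical H\'ajek--Le Cam proof, and its overall architecture (tightness of the pair, subsequential limits, Le Cam's third lemma, identification of the joint law via characteristic-function identities indexed by $u$) is indeed the standard one.

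However, the heart of the theorem --- the factorization of the subsequential limit $Q$ into $\mathcal{N}(0,I(\theta))\otimes M(\theta)$ --- is asserted rather than carried out, and the one concrete justification you give is not valid as stated. You claim the $R_T$-marginal limit under $\P_{\theta+c_Tu}^T$ is $u$-free because $R_T$ decomposes into two terms each with a $u$-free marginal limit; but the law of a difference is not determined by the marginals of its two terms, so this does not establish $u$-independence of the law of $R_T$ (and marginal $u$-independence alone would in any case not force independence of $R^\star$ from $\Delta^\star$). The correct argument feeds the full regularity statement into Le Cam's third lemma to get, for all real $u$ and $s$, the identity $\E_Q\big[e^{is'(S-\J_\theta\psi(\theta)u)}\,e^{u'\delta-\frac12 u'I(\theta)u}\big]=\widehat{L}(s)$, and then extends it to complex $u$ by analyticity; substituting a suitable imaginary value of $u$ is precisely what produces the factorized characteristic function of $(\Delta^\star,R^\star)$ and the formula $\widehat{M}(s)=\widehat{L}(s)\exp\big(\tfrac12 s'(\J_\theta\psi(\theta))I(\theta)^{-1}(\J_\theta\psi(\theta))'s\big)$ that you write down. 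That analytic-continuation step is the actual content of the proof and is missing from your sketch. Two smaller remarks: tightness of $c_T^{-1}(\widehat{\psi}_T-\psi(\theta))$ under $\P_\theta^T$ follows directly from regularity at $u=0$, so the contiguity detour is unnecessary; and one must still note that $\widehat{M}$ is a genuine characteristic function, which here comes for free because $R^\star$ is realized as an honest random vector.
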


The following proposition will allow us to show that a family of estimators is regular. It is a consequence of Proposition~7.1.8 p.227 of \cite{pfanzagl1994}.

\begin{proposition}\label{prop_estim_reg}
Let $\Theta\subset\R^d$ and $(\P_\theta^T, \theta\in\Theta)$, be a LAN family indexed by $T\geqslant 0$. Let $\psi : \Theta \to \R^k$ and $\widehat{\psi}_T$, $T\geqslant 0$, be a family of estimators. Assume there is a family of distributions $(L(\theta), \theta\in\Theta)$ such that the following conditions are fulfilled.
\begin{enumerate}
\item For all $\theta\in\Theta$, there is a neighbourhood $V(\theta)$ of $\theta$ such that the following convergence holds uniformly over $V(\theta)$.
\[
\P_\theta^T \circ  c_T^{-1}\left(\widehat{\psi}_T - \psi(\cdot)\right) \underset{T\to\infty}{\Longrightarrow} L(\theta)
\]
\item $\theta\mapsto L(\theta)$ is continuous for weak convergence, \emph{i.e.}, for all $\theta_0\in\Theta$,
\[
L(\theta) \underset{\theta\to\theta_0}{\Longrightarrow} L(\theta_0).
\]
\end{enumerate}
Then the family of estimators $\widehat{\psi}_T$, $T\geqslant 0$, is regular at every $\theta\in\Theta$.
\end{proposition}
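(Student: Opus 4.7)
The plan is to fix $\theta\in\Theta$ and $u\in\R^d$, set $\theta_T=\theta+c_Tu$, and show directly that
\[
\P_{\theta_T}^T \circ c_T^{-1}\bigl(\widehat{\psi}_T - \psi(\theta_T)\bigr) \underset{T\to\infty}{\Longrightarrow} L(\theta).
\]
Since $c_T\to 0$, the perturbed parameter $\theta_T$ belongs to the neighbourhood $V(\theta)$ of condition (i) for all $T$ large enough, so condition (i) applies at $\theta'=\theta_T$. The rest is then a triangle-inequality argument combining uniform convergence in $\theta'$ with the continuity of $\theta'\mapsto L(\theta')$ at $\theta$.

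Concretely, I would take a bounded continuous test function $f:\R^k\to\R$ and write
\[
\left|\E_{\theta_T}^T f\bigl(c_T^{-1}(\widehat{\psi}_T-\psi(\theta_T))\bigr) - \int f\,\d L(\theta)\right|
\]
\[
\leqslant \sup_{\theta'\in V(\theta)}\left|\E_{\theta'}^T f\bigl(c_T^{-1}(\widehat{\psi}_T-\psi(\theta'))\bigr) - \int f\,\d L(\theta')\right|
+ \left|\int f\,\d L(\theta_T) - \int f\,\d L(\theta)\right|.
\]
The first term tends to zero by the uniform weak convergence in condition (i), interpreted as uniform convergence of $\E_{\theta'}^T f$ to $\int f\,\d L(\theta')$ for every bounded continuous $f$ (this is the standard meaning, equivalent to uniform convergence in the L\'evy--Prokhorov metric). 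The second term tends to zero because $\theta_T\to\theta$ and condition (ii) asserts weak continuity of $\theta'\mapsto L(\theta')$, applied to the bounded continuous $f$. Hence the left-hand side tends to $0$ for every bounded continuous $f$, which is exactly the definition of regularity at $\theta$.

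The main obstacle, and really the only subtle point, is fixing the correct notion of "uniformly over $V(\theta)$" for weak convergence: one must specify it as uniform convergence of $\int f\,\d \P_\theta^T\circ c_T^{-1}(\widehat{\psi}_T-\psi(\cdot))$ to $\int f\,\d L(\cdot)$ for each bounded continuous $f$ (or, equivalently, in the L\'evy--Prokhorov metric). Once that interpretation is adopted the argument is immediate and does not require any further structure of the LAN family; the LAN hypothesis is used only insofar as the parametrization by the local radius $c_T$ is the natural scaling for the definition of regularity.
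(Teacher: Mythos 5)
Your argument is correct. Note, however, that the paper does not actually prove this proposition: it simply cites Proposition~7.1.8, p.~227 of Pfanzagl (1994), so your triangle-inequality derivation is a genuine, self-contained alternative to deferring to the reference. The decomposition you use --- bounding
\[
\left|\E_{\theta_T}^T f\bigl(c_T^{-1}(\widehat{\psi}_T-\psi(\theta_T))\bigr) - L(\theta)(f)\right|
\]
by the supremum over $V(\theta)$ of the deviation from $L(\theta')(f)$ plus the continuity term $|L(\theta_T)(f)-L(\theta)(f)|$, after observing that $\theta_T=\theta+c_Tu$ eventually lies in $V(\theta)$ --- is exactly the mechanism that makes the cited result work, and your reading of ``uniformly over $V(\theta)$'' (the limit law $L(\theta')$ varying with the parameter, i.e.\ $\sup_{\theta'\in V(\theta)}|Q_{\theta'}^{(T)}(f)-L(\theta')(f)|\to 0$ for each bounded continuous $f$) is the one consistent both with the paper's own definition of uniform weak convergence and with the fact that condition~(ii) would otherwise be redundant. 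You are also right that the LAN structure plays no role beyond supplying the normalizing rate $c_T$ in the definition of regularity. What the citation buys the paper is brevity; what your proof buys is transparency about precisely which hypotheses are used and where.
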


In the previous proposition we have used the notion of uniform weak convergence that is defined as follows.

\begin{definition}
Let $\Theta\subset\R^d$ and $Q_\theta^{(n)}$, for all $n\in\N$, $\theta\in\Theta$, let $Q_\theta^{(n)}$ and $Q_\theta$ be probability measures over $(\R^k,\mathcal{B}(\R^k))$.
The sequence $Q_\theta^{(n)}$, $n\in\N$ converges weakly to $Q_\theta$ uniformly over $\Theta$ if for any bounded continuous function $h$,
\[
\lim_{n\to\infty} \sup_{\theta\in\Theta}
\left| Q_\theta^{(n)}(h) - Q_\theta(h) \right| = 0.
\]
\end{definition}

The following proposition (\cite{pfanzagl1994} p.289) gives an asymptotic expansion for a family of estimators which has the optimal limit in distribution.

\begin{proposition}\label{prop_loi_optimale}
Under conditions of Theorem~\ref{theo_convolution}, any regular family of estimators  $(\widehat{\psi}_T,T\geqslant 0)$ which has the optimal limit in distribution, \emph{i.e.}
\[
L(\theta) = \mathcal{N}(0, (\J_\theta \psi(\theta)) I(\theta)^{-1} (\J_\theta \psi(\theta))'),
\]
has the following asymptotic expansion. As $T\to\infty$,
\[
\sqrt{T} (\widehat{\psi}_T - \psi(\theta)) = (\J_\theta \psi(\theta)) I(\theta)^{-1} \Delta_T(\theta) + o_{\P_\theta^T}(1).
\]
\end{proposition}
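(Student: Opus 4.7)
The plan is to extract the claim directly from the joint convergence (\ref{convergence_produit}) supplied by Theorem~\ref{theo_convolution}. That theorem gives
\[
\P_\theta^T \circ \bigl( \Delta_T(\theta),\; c_T^{-1}(\widehat{\psi}_T - \psi(\theta)) - (\J_\theta \psi(\theta)) I(\theta)^{-1} \Delta_T(\theta) \bigr) \Longrightarrow \mathcal{N}(0,I(\theta)) \times M(\theta),
\]
together with the identification $L(\theta) = \mathcal{N}(0, (\J_\theta \psi(\theta)) I(\theta)^{-1} (\J_\theta \psi(\theta))') * M(\theta)$. So I already have a candidate remainder term; I only need to show that under the optimality hypothesis on $L(\theta)$ the measure $M(\theta)$ reduces to a point mass at the origin, and then interpret weak convergence to a point mass as convergence in $\P_\theta^T$-probability.

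First I would compare the two expressions for $L(\theta)$. By assumption $L(\theta) = \mathcal{N}(0,(\J_\theta \psi(\theta)) I(\theta)^{-1} (\J_\theta \psi(\theta))')$, while Theorem~\ref{theo_convolution} provides $L(\theta) = \mathcal{N}(0,(\J_\theta \psi(\theta)) I(\theta)^{-1} (\J_\theta \psi(\theta))') * M(\theta)$. Passing to characteristic functions, the equality of these two distributions translates into $\widehat{M(\theta)}(t) = 1$ on a neighbourhood of the origin where the Gaussian characteristic function is non-zero, hence everywhere by analyticity. This forces $M(\theta) = \delta_0$. (The symmetric positive definite matrix $I(\theta)$ and the rank condition on $\J_\theta\psi(\theta)$ make the Gaussian factor non-degenerate along every direction in $\R^k$, which is what one needs for this cancellation argument to work in all coordinates.)

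Next, once $M(\theta) = \delta_0$ is established, the second marginal in (\ref{convergence_produit}) converges weakly to $\delta_0$, i.e.
\[
c_T^{-1}(\widehat{\psi}_T - \psi(\theta)) - (\J_\theta \psi(\theta)) I(\theta)^{-1} \Delta_T(\theta) \;\Longrightarrow\; 0
\]
under $\P_\theta^T$. Since the limit is deterministic, weak convergence upgrades to convergence in probability, which is exactly the $o_{\P_\theta^T}(1)$ assertion. The statement of the proposition uses $\sqrt{T}$ in place of $c_T^{-1}$, which corresponds to the standard parametrisation $c_T = T^{-1/2}$ of the LAN family.

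The only non-routine step is the deconvolution argument that identifies $M(\theta)$ with $\delta_0$. Everything else is bookkeeping: rewriting the joint convergence as a statement about the remainder and invoking the classical fact that weak convergence to a Dirac mass equals convergence in probability.
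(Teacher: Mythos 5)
Your argument is correct. Note that the paper does not actually prove this proposition: it simply quotes it from Pfanzagl (1994, p.~289), so there is no internal proof to compare against. Your derivation is the standard one and is self-contained given Theorem~\ref{theo_convolution}: from $L(\theta) = \mathcal{N}(0,\Sigma) * M(\theta)$ with $\Sigma = (\J_\theta \psi(\theta)) I(\theta)^{-1} (\J_\theta \psi(\theta))'$ and the hypothesis $L(\theta) = \mathcal{N}(0,\Sigma)$, the characteristic-function identity $\widehat{L}= \widehat{\mathcal{N}}\,\widehat{M}$ forces $\widehat{M}\equiv 1$, hence $M(\theta)=\delta_0$; then the second marginal of (\ref{convergence_produit}) converges weakly to $\delta_0$, and weak convergence to a constant is convergence in probability. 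Two small simplifications are available: the Gaussian characteristic function $t\mapsto \exp(-t'\Sigma t/2)$ vanishes nowhere (even when $\Sigma$ is singular), so you do not need the restriction to a neighbourhood of the origin, the analyticity step, or the rank condition on $\J_\theta\psi(\theta)$ at that point of the argument — division by $\widehat{\mathcal{N}}(t)$ is legitimate for every $t$. You should also make explicit (though it is routine) that passing from the joint convergence in (\ref{convergence_produit}) to convergence of the second marginal uses the continuous mapping theorem applied to the projection, and that the normalisation $c_T = T^{-1/2}$ is exactly the one imposed in Assumption~\ref{hypo_LAN_1}.
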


\subsection{Asymptotic bounds for LAN families}

Assumptions 2.1 and 2.2 we made in the article (\cite{bosq2012}), in particular about the asymptotic behaviour of the derivative of the bias, may seem somewhat arbitrary and may be difficult to verify in some instances.
In this article we present an alternative approach which allows to dispense with those by using other assumptions.
We begin with the following assumption.

\begin{hypo}\label{hypo_LAN_1}{\ }
\begin{enumerate}
\item\label{condition_LAN_1} The family of models $(\P_\theta^T,\, \theta\in\Theta)$, $T\geqslant 0$, where $\P_\theta^T$ is the distribution of $(X_t,\,0\leqslant t \leqslant T)$ under $\P_\theta$, is LAN at every $\theta\in\Theta$, with $c_T = T^{-1/2}$.
\item\label{condition_suite_est_reg} For all $\theta\in\Theta$, for $\mu_\theta$-almost all $x\in E$, the family of estimators $\widehat{r}_T(x)$ is regular at $\theta$ for estimating $r(x,\theta)$ and the limit in distribution is centered with finite variance, \emph{i.e.} there is a centered distribution $L(x,\theta)$ with finite variance such that the following convergence holds for all $u\in\R^d$
\begin{equation}\label{equa_famille_reguliere}
\P_{\theta + u/\sqrt{T}}^T \circ \sqrt{T}\left(\widehat{r}_T(x) - r\left(x,\theta +  u/\sqrt{T}\right)\right) \underset{T\to\infty}{\Longrightarrow} L(x,\theta).
\end{equation}
\end{enumerate}
\end{hypo}

Condition (\ref{equa_famille_reguliere}), which corresponds to regularity of the family of estimators, makes possible to apply the Theorem~\ref{theo_convolution} (H\'ajek--Le Cam convolution theorem).

\begin{lemme}\label{lemme_variance_asympt_ineg_1d}
Let $Y_n$, $n\in\N$ be a sequence of random variables of $\R$ such that $Y_n \Longrightarrow Y$ where $\E Y=0$ and $\E Y^2 = v$. If $\E Y_n^2 \xrightarrow[n\to \infty]{} w$ then $w \geqslant v$.
\end{lemme}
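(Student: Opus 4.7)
The plan is to establish lower semicontinuity of the second moment under weak convergence via a truncation argument, exploiting the Portmanteau theorem for bounded continuous test functions.

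First I would fix a truncation level $M>0$ and introduce $\varphi_M(x) = (x \wedge M) \vee (-M)$ so that $\varphi_M$ is bounded continuous and $\varphi_M^2$ is also bounded continuous. Since $Y_n \Longrightarrow Y$, the Portmanteau theorem gives
\[
\lim_{n\to\infty} \E\,\varphi_M(Y_n)^2 = \E\,\varphi_M(Y)^2.
\]
The obvious pointwise inequality $\varphi_M(x)^2 \leqslant x^2$ yields, for every $n$, the bound $\E\,\varphi_M(Y_n)^2 \leqslant \E Y_n^2$. Taking $n\to\infty$ and using the hypothesis $\E Y_n^2 \to w$ gives
\[
\E\,\varphi_M(Y)^2 \leqslant w
\]
for every $M>0$.

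Then I would let $M\to\infty$. Since $\varphi_M(Y)^2 \uparrow Y^2$ pointwise as $M\to\infty$, the monotone convergence theorem gives $\E\,\varphi_M(Y)^2 \to \E Y^2 = v$, and passing to the limit yields $v \leqslant w$ as desired.

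The argument is essentially routine lower semicontinuity of $x \mapsto x^2$ under weak convergence; the only place requiring care is ensuring that one uses a bounded continuous surrogate before appealing to Portmanteau (the function $x\mapsto x^2$ itself is not bounded, so direct application would require uniform integrability, which is precisely what we are trying to avoid assuming). Note that the hypotheses $\E Y = 0$ plays no role in this proof — the result is really a statement about second moments alone — so I would not use it. No sharpness or reverse inequality is claimed, in accordance with the fact that mass can escape to infinity in a weak limit, making $w$ strictly larger than $v$.
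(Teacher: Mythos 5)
Your argument is correct and self-contained: truncating with $\varphi_M$, applying the definition of weak convergence to the bounded continuous function $\varphi_M^2$, using $\varphi_M^2\leqslant x^2$ to get $\E\,\varphi_M(Y)^2\leqslant w$ for each $M$, and then letting $M\to\infty$ by monotone convergence is exactly the standard lower-semicontinuity argument for the second moment under weak convergence, and every step is justified. The paper, by contrast, gives no argument at all --- it simply cites Lemma~1.14, p.~437 of Lehmann (1998), which records this same fact (an equivalent route to that lemma is the Skorokhod representation theorem followed by Fatou's lemma). So your route is genuinely different only in the sense that you actually prove the statement rather than deferring to a reference; what your version buys is a short, elementary, self-contained proof that makes clear where uniform integrability would otherwise be needed and why it can be avoided. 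Your observation that the centering hypothesis $\E Y=0$ is not used is also accurate: the lemma is purely a statement about second moments, and the centering only matters for how the lemma is invoked later (where $v$ is interpreted as the variance of the limit law in the convolution theorem).
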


\begin{proof}
This result follows from Lemma~1.14 p.~437 from \cite{lehmann1998}.
\end{proof}

\begin{proposition}\label{prop_borne_asympt_LAN_1d}
In the univariate case and under Assumption~\ref{hypo_LAN_1}, the following asymptotic lower bound holds for all $\theta\in\Theta$,
\[
\varliminf_{T\to\infty} T \rho_T(\theta) \geqslant
\int_E \frac{(\partial_\theta r(x,\theta))^2} {I(\theta)} d\mu_\theta(x).
\]
\end{proposition}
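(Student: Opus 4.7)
The plan is to reduce the integrated bound to a pointwise (in $x$) statement via Fatou's lemma, and then to invoke the H\'ajek--Le Cam convolution theorem in dimension one for each fixed $x$. Concretely, since $T \E_\theta(\widehat{r}_T(x) - r(x,\theta))^2 \geqslant 0$, Fatou's lemma gives
\[
\varliminf_{T\to\infty} T \rho_T(\theta) = \varliminf_{T\to\infty} \int_E T \E_\theta(\widehat{r}_T(x) - r(x,\theta))^2 \, d\mu_\theta(x) \geqslant \int_E \varliminf_{T\to\infty} T \E_\theta(\widehat{r}_T(x) - r(x,\theta))^2 \, d\mu_\theta(x),
\]
so it suffices to show $\varliminf_{T\to\infty} T \E_\theta(\widehat{r}_T(x) - r(x,\theta))^2 \geqslant (\partial_\theta r(x,\theta))^2 / I(\theta)$ for $\mu_\theta$-almost every $x$.

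For such an $x$, if $\partial_\theta r(x,\theta) = 0$ the bound is trivial. Otherwise, I would apply Theorem~\ref{theo_convolution} to the regular family $\widehat{r}_T(x)$ (regular by Assumption~\ref{hypo_LAN_1}, part \ref{condition_suite_est_reg}) viewed as an estimator of $\psi_x(\theta) := r(x,\theta)$, a differentiable map $\Theta \to \R$ whose derivative at $\theta$ is nonzero (so the rank condition $k=1$ holds). The conclusion of the convolution theorem gives
\[
L(x,\theta) = \mathcal{N}\!\left(0, \frac{(\partial_\theta r(x,\theta))^2}{I(\theta)}\right) * M(x,\theta)
\]
for some distribution $M(x,\theta)$, so that the variance of $L(x,\theta)$ is at least $(\partial_\theta r(x,\theta))^2 / I(\theta)$.

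Taking $u = 0$ in the regularity condition yields $\sqrt{T}(\widehat{r}_T(x) - r(x,\theta)) \Longrightarrow L(x,\theta)$ under $\P_\theta^T$, where $L(x,\theta)$ is centered with finite variance by Assumption~\ref{hypo_LAN_1}. I would then apply a liminf version of Lemma~\ref{lemme_variance_asympt_ineg_1d}: extract a subsequence realizing $\varliminf_{T} T \E_\theta(\widehat{r}_T(x) - r(x,\theta))^2$; if this subsequence has a finite limit $w$, Lemma~\ref{lemme_variance_asympt_ineg_1d} directly gives $w \geqslant \mathrm{Var}(L(x,\theta))$, and if the limit is $+\infty$ the inequality is trivial. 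Chaining the two inequalities yields the pointwise bound, and combining with the Fatou step concludes.

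The main obstacle I anticipate is the passage from convergence of second moments (the hypothesis of Lemma~\ref{lemme_variance_asympt_ineg_1d}) to a mere liminf, which is why the subsequence argument above is needed; everything else is a clean application of the convolution theorem in the one-dimensional case $k = d = 1$, with Fatou handling the integral. One small point to verify is that measurability in $x$ of the liminf integrand is not an issue, since it is a countable infimum of suprema of measurable functions.
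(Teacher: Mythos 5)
Your proof follows essentially the same route as the paper's: apply the H\'ajek--Le Cam convolution theorem pointwise in $x$ to identify $L(x,\theta)$ as a convolution with $\mathcal{N}\bigl(0,(\partial_\theta r(x,\theta))^2/I(\theta)\bigr)$, deduce the pointwise liminf bound via Lemma~\ref{lemme_variance_asympt_ineg_1d}, and integrate with Fatou. In fact your version is slightly more careful than the paper's on two points it glosses over --- the degenerate case $\partial_\theta r(x,\theta)=0$ where the rank hypothesis of Theorem~\ref{theo_convolution} fails, and the subsequence extraction needed because Lemma~\ref{lemme_variance_asympt_ineg_1d} assumes convergence of second moments rather than a mere liminf --- so it is correct.
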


\begin{proof}
Let $x\in E$ such that condition~\ref{condition_suite_est_reg} of Assumption~\ref{hypo_LAN_1} is fulfilled. According to H\'ajek--Le Cam convolution theorem, for all $\theta\in\Theta$, there is $M_x(\theta)$ such that
\[
L(x,\theta) = \mathcal{N}\left(0,\frac{(\partial_\theta r(x,\theta))^2} {I(\theta)}\right) * M_x(\theta).
\]
Using Lemma~\ref{lemme_variance_asympt_ineg_1d} we deduce
\[
\varliminf_{T\to\infty} T \E_\theta (\widehat{r}_T(x) - r(x,\theta))^2 \geqslant
\frac{(\partial_\theta r(x,\theta))^2} {I(\theta)}.
\]
Now, from Fatou lemma,
\[
\varliminf_{T\to\infty} \int_E T \E_\theta (\widehat{r}_T(x) - r(x,\theta))^2 d\mu_\theta(x)
\geqslant
\int_E \varliminf_{T\to\infty} T \E_\theta (\widehat{r}_T(x) - r(x,\theta))^2 d\mu_\theta(x),
\]
therefore
\[
\varliminf_{T\to\infty} T \rho_T(\theta) \geqslant
\int_E \frac{(\partial_\theta r(x,\theta))^2} {I(\theta)} d\mu_\theta(x).
\]
\end{proof}

\begin{remarque}
It is desirable that the quantities $\varlimsup_{T\to\infty} T \rho_T(\theta)$ and $\varliminf_{T\to\infty} T \rho_T(\theta)$ be as small as possible. Yet, from Proposition~\ref{prop_borne_asympt_LAN_1d}, they are at least equal to
\[
\int_E \frac{(\partial_\theta r(x,\theta))^2} {I(\theta)} d\mu_\theta(x).
\]
Hence Proposition~\ref{prop_borne_asympt_LAN_1d} gives an asymptotic optimality criterion for the estimators of the regression function in the univariate case and in the setting of Assumption~\ref{hypo_LAN_1}. The following definition ensues.
\end{remarque}

\begin{definition}\label{def_eff_asympt_LAN_1d}
We say that a family of estimators $\widehat{r}_T$, $T\geqslant 0$, is \emph{asymptotically efficient} for estimating $r(\cdot,\theta)$ if for all $\theta\in\Theta$
\[
\lim_{T\to\infty} T \rho_T(\theta) =
\int_E \frac{(\partial_\theta r(x,\theta))^2} {I(\theta)} d\mu_\theta(x).
\]
\end{definition}

As a shortcut, we will say that the estimator $\widehat{r}_T$ is asymptotically efficient, to mean that the family of estimators $\widehat{r}_T$, $T\geqslant 0$, is asymptotically efficient.

\begin{hypo}\label{hypo_EQR_EQP_equiv_LAN}
We assume that the following limits exist, are finite, non zero and equal.
\[
\lim_{T\to\infty} T \rho_T(\theta)
= \lim_{T\to\infty} T R_T(\theta).
\]
\end{hypo}

For the application that we have in mind, forecasting of random processes, it is not straightforward to verify this assumption. Theorems~\ref{risk_equivalence_multi_d} and \ref{prop_risk_eq_bis}, derived in Section~\ref{section_asympt_eq_QEP_QER}, give conditions to verify this assumption. Theorem~\ref{prop_risk_eq_bis} will allow us to verify Assumption~\ref{hypo_EQR_EQP_equiv_LAN} for the problem of forecasting of the bivariate stationary Ornstein-Uhlenbeck process in Section~\ref{section_OU_risk_eq}.

The following corollary is deduced immediately.

\begin{corollaire}
In the univariate case, under Assumptions~\ref{hypo_LAN_1} and \ref{hypo_EQR_EQP_equiv_LAN}, for all $\theta\in\Theta$
\[
\lim_{T\to\infty} T R_T(\theta) \geqslant
\int_E \frac{(\partial_\theta r(x,\theta))^2} {I(\theta)} d\mu_\theta(x).
\]
\end{corollaire}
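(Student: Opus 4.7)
The plan is to chain the two ingredients that are already in place. Assumption~\ref{hypo_EQR_EQP_equiv_LAN} asserts that both $\lim_{T\to\infty} T\rho_T(\theta)$ and $\lim_{T\to\infty} T R_T(\theta)$ exist, are finite, and coincide. Proposition~\ref{prop_borne_asympt_LAN_1d} supplies, under Assumption~\ref{hypo_LAN_1}, the asymptotic lower bound
\[
\varliminf_{T\to\infty} T\rho_T(\theta) \geqslant \int_E \frac{(\partial_\theta r(x,\theta))^2}{I(\theta)}\,d\mu_\theta(x).
\]

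First I would observe that whenever a limit of a sequence of real numbers exists in $\R$, it equals the corresponding $\varliminf$. Applying this to the sequence $(T\rho_T(\theta))_T$, whose existence of limit is granted by Assumption~\ref{hypo_EQR_EQP_equiv_LAN}, gives $\lim_{T\to\infty} T\rho_T(\theta) = \varliminf_{T\to\infty} T\rho_T(\theta)$. Then I would use the equality part of Assumption~\ref{hypo_EQR_EQP_equiv_LAN} to replace the left-hand side with $\lim_{T\to\infty} T R_T(\theta)$, and finally invoke Proposition~\ref{prop_borne_asympt_LAN_1d} to conclude.

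There is no substantive obstacle here: the corollary is a one-line concatenation of the two preceding statements, and no additional analytic input (no Fatou argument, no regularity verification, no LAN manipulation) is required beyond what has already been carried out in the proof of Proposition~\ref{prop_borne_asympt_LAN_1d}. The only point to mention, for rigour, is that the passage from $\varliminf$ to $\lim$ is justified precisely because Assumption~\ref{hypo_EQR_EQP_equiv_LAN} asks for existence of the limits; without that assumption one would only obtain the bound on $\varliminf_{T\to\infty} T R_T(\theta)$ via the same chain (together with the equality of the $\varliminf$'s inherited from the equality of the full limits).
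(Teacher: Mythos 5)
Your argument is correct and is exactly the "immediate deduction" the paper intends: the existence of the limit from Assumption~\ref{hypo_EQR_EQP_equiv_LAN} turns the $\varliminf$ bound of Proposition~\ref{prop_borne_asympt_LAN_1d} into a bound on the limit, which then transfers to $T R_T(\theta)$ by the asserted equality of the two limits. The paper gives no explicit proof precisely because this is the whole content, so your write-up matches the paper's route.
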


This result provides an asymptotic optimality criterion for the predictors of $r(X_T,\theta)$ in the univariate case and in the setting of Assumptions~\ref{hypo_LAN_1} and \ref{hypo_EQR_EQP_equiv_LAN}. The following definition ensues.

\begin{definition}\label{def_eff_asympt_pred_LAN_1d}
We say that a family of predictors $\widehat{r}(X_T)$, $T\geqslant 0$, is \emph{asymptotically efficient} for predicting $r(X_T,\theta)$ if for all $\theta \in \Theta$,
\[
\lim_{T\to\infty} T R_T(\theta) = \int_E \frac{(\partial_\theta r(x,\theta))^2} {I(\theta)} d\mu_\theta(x).
\]
\end{definition}

We now continue with the generalization of these results to the multivariate case.

\begin{lemme}\label{lemme_variance_asympt_ineg}
Let $Y_n$, $n\in\N$ be a sequence of random vectors of $\R^k$ such that $Y_n \Longrightarrow Y$ where $\E Y=0$ and $\E Y^{\times 2} = V$. let $y\in \R^k$, if $y' (\E Y_n^{\times 2}) y \xrightarrow[n\to \infty]{} w$ then $w \geqslant y' V y$.
\end{lemme}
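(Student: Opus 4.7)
The plan is to reduce to the scalar case already handled by Lemma~\ref{lemme_variance_asympt_ineg_1d}. Fix $y\in\R^k$ and define the real-valued random variables $Z_n = y' Y_n$ and $Z = y' Y$. Since the linear map $z \mapsto y' z$ is continuous from $\R^k$ to $\R$, the continuous mapping theorem applied to the weak convergence $Y_n \Longrightarrow Y$ yields $Z_n \Longrightarrow Z$.

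Next I would rewrite the matrix quadratic form as a scalar second moment. Because $Y_n^{\times 2} = Y_n Y_n'$, we have
\[
y' Y_n^{\times 2} y = (y' Y_n)(Y_n' y) = (y' Y_n)^2 = Z_n^2,
\]
so that $\E Z_n^2 = y' (\E Y_n^{\times 2}) y \to w$. Likewise $\E Z = y' \E Y = 0$ and $\E Z^2 = y' V y$, which is in particular finite.

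Having verified the three scalar hypotheses ($Z_n \Rightarrow Z$, $\E Z = 0$, $\E Z^2 = y'Vy < \infty$, and $\E Z_n^2 \to w$), Lemma~\ref{lemme_variance_asympt_ineg_1d} applies directly and gives $w \geqslant \E Z^2 = y' V y$, which is the desired inequality. There is no real obstacle here beyond the identification of $y' Y_n^{\times 2} y$ as the second moment of the projected sequence $y' Y_n$; once that is noticed, the result is an immediate corollary of the one-dimensional statement.
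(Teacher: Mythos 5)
Your reduction to the scalar case via the projection $Z_n = y'Y_n$, followed by the continuous mapping theorem and an application of Lemma~\ref{lemme_variance_asympt_ineg_1d}, is exactly the argument the paper gives (the paper simply states that the result follows by applying the one-dimensional lemma to $(y'Y_n)$). Your version is correct and just spells out the details.
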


\begin{proof}
The result follows from Lemma~\ref{lemme_variance_asympt_ineg_1d} applied to the sequence of real random variables $(y' Y_n)$, $n\in\N$.
\end{proof}

\begin{proposition}\label{prop_borne_asympt_LAN_md}
Under Assumption \ref{hypo_LAN_1}, the following asymptotic lower bound holds for all $\theta\in\Theta$ and all $y\in\R^k$
\[
\varliminf_{T\to\infty} T y' \rho_T(\theta) y \geqslant
y' \int_E (\J_\theta r(x,\theta)) I(\theta)^{-1} (\J_\theta r(x,\theta))' d\mu_\theta(x) y.
\]
\end{proposition}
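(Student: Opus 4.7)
The plan is to mirror the scalar proof of Proposition~\ref{prop_borne_asympt_LAN_1d} one direction $y\in\R^k$ at a time, using Lemma~\ref{lemme_variance_asympt_ineg} in place of its scalar version and invoking the H\'ajek--Le Cam convolution theorem applied pointwise in $x$ with $\psi$ being the vector-valued map $\theta\mapsto r(x,\theta)$.

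First, I fix $\theta\in\Theta$, $y\in\R^k$ and an $x\in E$ in the full-$\mu_\theta$-measure set on which condition~\ref{condition_suite_est_reg} of Assumption~\ref{hypo_LAN_1} holds. Viewing $x$ as frozen, the family $\widehat{r}_T(x)$, $T\geqslant 0$, is a regular family of estimators for $\psi_x(\theta):=r(x,\theta)$, and $\J_\theta\psi_x(\theta) = \J_\theta r(x,\theta)$. Theorem~\ref{theo_convolution} then provides a distribution $M_x(\theta)$ on $(\R^k,\mathcal{B}(\R^k))$ such that
\[
L(x,\theta) = \mathcal{N}\bigl(0, (\J_\theta r(x,\theta))\, I(\theta)^{-1} (\J_\theta r(x,\theta))'\bigr) * M_x(\theta),
\]
so the covariance of $L(x,\theta)$ dominates $(\J_\theta r(x,\theta))\, I(\theta)^{-1} (\J_\theta r(x,\theta))'$ in the L\"owner order.

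Second, I apply Lemma~\ref{lemme_variance_asympt_ineg} to $Y_T=\sqrt{T}(\widehat{r}_T(x)-r(x,\theta))$, which converges in distribution to $L(x,\theta)$ under $\P_\theta^T$ (take $u=0$ in (\ref{equa_famille_reguliere})). Since the lemma as stated assumes convergence of $y'(\E Y_n^{\times 2})y$, I extract a subsequence along which $T y'\E_\theta[(\widehat r_T(x)-r(x,\theta))^{\times 2}]y$ tends to its $\varliminf$ and then apply the lemma to get, for $\mu_\theta$-a.e.\ $x$,
\[
\varliminf_{T\to\infty} T\, y' \E_\theta\bigl[(\widehat{r}_T(x)-r(x,\theta))^{\times 2}\bigr] y \;\geqslant\; y' (\J_\theta r(x,\theta))\, I(\theta)^{-1} (\J_\theta r(x,\theta))'\, y.
\]
Third, I integrate this pointwise inequality against $\mu_\theta$ and swap $\int$ with $\varliminf$ via Fatou's lemma, exactly as at the end of the proof of Proposition~\ref{prop_borne_asympt_LAN_1d}, to obtain the required multivariate bound.

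The only delicate point I foresee is the rank hypothesis in Theorem~\ref{theo_convolution}, which demands that $\J_\theta r(x,\theta)$ have rank $k$. If this fails at some $x$, one can either restrict the argument to the image of the Jacobian (the bound being trivially sharp on the orthogonal complement since the relevant quadratic form vanishes there) or pad $\psi_x$ with an independent coordinate and pass to the limit. This is a standard workaround and does not alter the structure of the proof; apart from it, everything else is a direct vector-valued transcription of the univariate argument.
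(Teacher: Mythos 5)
Your proposal is correct and follows essentially the same route as the paper, whose proof of this proposition is literally the one-line remark that it is identical to the univariate argument with Lemma~\ref{lemme_variance_asympt_ineg} replacing Lemma~\ref{lemme_variance_asympt_ineg_1d}: convolution theorem pointwise in $x$, the asymptotic-variance lemma applied to $y'Y_T$, then Fatou. You are in fact somewhat more careful than the paper, both in extracting a subsequence to reconcile the lemma's convergence hypothesis with the $\varliminf$ and in flagging the rank-$k$ hypothesis of Theorem~\ref{theo_convolution}, which the paper silently ignores.
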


\begin{proof}
The proof is identical to the proof of Proposition~\ref{prop_borne_asympt_LAN_1d} except we apply Lemma~\ref{lemme_variance_asympt_ineg} instead of Lemma~\ref{lemme_variance_asympt_ineg_1d}.
\end{proof}

\begin{remarque}
The smaller
$\varlimsup_{T\to\infty} T y' \rho_T(\theta) y$ and $\varliminf_{T\to\infty} T y' \rho_T(\theta) y$ are, the better the estimator is (asymptotically). Yet, from Proposition~\ref{prop_borne_asympt_LAN_md}, they are at least equal to
\[
y' \int_E (\J_\theta r(x,\theta)) I(\theta)^{-1} (\J_\theta r(x,\theta))' d\mu_\theta(x) y.
\]
Hence Proposition~\ref{prop_borne_asympt_LAN_md} gives an asymptotic optimality criterion for the estimators of the regression function in the multivariate case and in the setting of Assumption~\ref{hypo_LAN_1}. The following definition ensues.
\end{remarque}

\begin{definition}\label{def_eff_asympt_LAN_md}
We say that the family of estimators $\widehat{r}_T$, $T\geqslant 0$, is \emph{asymptotically efficient} for estimating $r(\cdot,\theta)$ if for all $\theta\in\Theta$,
\[
\lim_{T\to\infty} T \rho_T(\theta) =
\int_E (\J_\theta r(x,\theta)) I(\theta)^{-1} (\J_\theta r(x,\theta))' d\mu_\theta(x).
\]
\end{definition}

In the multivariate case as well as in the univariate case, Assumption~\ref{hypo_EQR_EQP_equiv_LAN} allows to obtain immediately an asymptotic bound on QEP.

\begin{corollaire}
In the multivariate case, under Assumptions~\ref{hypo_LAN_1} and \ref{hypo_EQR_EQP_equiv_LAN}, for all $\theta\in\Theta$
\[
\lim_{T\to\infty} T y' R_T(\theta) y \geqslant
y' \int_E (\J_\theta r(x,\theta)) I(\theta)^{-1} (\J_\theta r(x,\theta))' d\mu_\theta(x) y.
\]
\end{corollaire}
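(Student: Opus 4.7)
The plan is that this corollary is an essentially immediate consequence of Proposition~\ref{prop_borne_asympt_LAN_md} together with Assumption~\ref{hypo_EQR_EQP_equiv_LAN}; no new analytic ingredient is needed, only a careful transfer of the liminf bound from the QER to the QEP.

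Concretely, the steps I would carry out are as follows. Fix $\theta\in\Theta$ and $y\in\R^k$. First, Proposition~\ref{prop_borne_asympt_LAN_md} applied at $\theta$ and $y$ gives directly
\[
\varliminf_{T\to\infty} T y' \rho_T(\theta) y \geqslant
y' \int_E (\J_\theta r(x,\theta)) I(\theta)^{-1} (\J_\theta r(x,\theta))' d\mu_\theta(x) y.
\]
Second, Assumption~\ref{hypo_EQR_EQP_equiv_LAN} states that the matrix limits $\lim_T T\rho_T(\theta)$ and $\lim_T TR_T(\theta)$ both exist (as finite matrices) and coincide. Since the map $M\mapsto y'My$ is linear and continuous on $\M_k$, the scalar limits $\lim_T Ty'\rho_T(\theta)y$ and $\lim_T Ty'R_T(\theta)y$ therefore exist and are equal. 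In particular the liminf on the left-hand side of the displayed inequality coincides with $\lim_T Ty'R_T(\theta)y$, and substituting gives exactly the claim.

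There is no genuine obstacle here; the statement is advertised as a corollary and the proof really is just a one-line splicing. The only mildly nontrivial point worth recording in the written proof is the passage from the matrix-valued limit in Assumption~\ref{hypo_EQR_EQP_equiv_LAN} to the scalar-valued limit of quadratic forms $y'(\cdot)y$, so that the $\varliminf$ in Proposition~\ref{prop_borne_asympt_LAN_md} can be legitimately replaced by $\lim$ on the QEP side. Once that is noted, the chain of (in)equalities is mechanical.
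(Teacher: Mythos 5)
Your proposal is correct and matches the paper, which simply states that the corollary is deduced immediately from Proposition~\ref{prop_borne_asympt_LAN_md} and Assumption~\ref{hypo_EQR_EQP_equiv_LAN}; your explicit remark on passing from the matrix limit to the scalar quadratic form $y'(\cdot)y$, so that the $\varliminf$ becomes a genuine limit, is exactly the (only) point that needs to be checked.
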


The following generalization of Definition~\ref{def_eff_asympt_pred_LAN_1d} to the multivariate case ensues.

\begin{definition}\label{def_eff_asympt_pred_LAN_md}
We say that a family of predictors $\widehat{r}(X_T)$, $T\geqslant 0$, is \emph{asymptotically efficient} for predicting $r(X_T,\theta)$ if for all $\theta \in \Theta$,
\[
\lim_{T\to\infty} T R_T(\theta) = \int_E (\J_\theta r(x,\theta)) I(\theta)^{-1} (\J_\theta r(x,\theta))' d\mu_\theta(x).
\]
\end{definition}

\subsection{Bivariate Ornstein-Uhlenbeck process}\label{section_OU_bivarié}

Here we study a forecasting problem of a bivariate stationary Ornstein-Uhlenbeck process. The regression function is estimated by a plug-in estimator for which the model parameter is estimated by the MLE (maximum likelihood estimator). We are going to see that Assumption~\ref{hypo_LAN_1} is fulfilled and the plug-in estimator is asymptotically efficient.

\subsubsection{Model}

We consider the stationary solution $(X_t \in \R^2,\,t\geqslant 0)$ of the following stochastic differential equation,
\begin{equation}\label{equation_OU_2d}
dX_t = -Q(\theta)X_tdt + dW_t,
\end{equation}
with $(W_t,\,t\geqslant 0)$ a standard bivariate Wiener process (\emph{i.e.} a bivariate random process which components are two independent Wiener processes) and,
\[
Q(\theta) = \alpha I_2 + \beta A =
\begin{pmatrix}
\alpha & \beta \\
\beta & \alpha
\end{pmatrix}
\quad\text{with}\quad
A=\begin{pmatrix}
0 & 1 \\
1 & 0
\end{pmatrix}
\quad\text{and}\quad
\theta =\begin{pmatrix}\alpha \\ \beta\end{pmatrix}.
\]
We fix the following set of parameters,
\[
\Theta = \left\{ \theta\in\R^2 \; | \; Q(\theta) = \alpha I_2 + \beta A \text{ is positive definite} \, \right\}.
\]

Applying Itô formula to $g(t,x) = e^{Q(\theta)t} x$ gives an expression of $X_T$,
\[
X_T = e^{- Q(\theta)T} X_0 + \int_0^T e^{Q(\theta)(t-T)} dW_t.
\]

We derive the variance of $X_T$.
\begin{align*}
\E_\theta X_T^{\times 2} & = e^{-Q(\theta) T} (\E_\theta X_0^{\times 2}) e^{-Q(\theta) T}
+ \E_\theta \left(\int_0^T  e^{-Q(\theta) (t - T)} dW_t \right)^{\times 2}\\
& = e^{-Q(\theta) T} (\E_\theta X_0^{\times 2}) e^{-Q(\theta) T}
+ \int_0^T e^{2 Q(\theta) (t - T)} dt\\
& = e^{-Q(\theta) T} (\E_\theta X_0^{\times 2}) e^{-Q(\theta) T}
+ \frac{Q(\theta)^{-1}}{2} - \frac{Q(\theta)^{-1}}{2} e^{- 2 Q(\theta) T}.
\end{align*}
For a stationary process it follows that
\begin{equation}\label{variance_X_T}
\E_\theta X_T^{\times 2} = \frac{Q(\theta)^{-1}}{2}
= \frac{1}{2(\alpha^2 - \beta^2)} \begin{pmatrix} \alpha & -\beta \\ -\beta & \alpha \end{pmatrix}.
\end{equation}

For any $\theta\in\Theta$ the distribution of $X_{(T)}$ is absolutely continuous with respect to the distribution of the process $\big( U + W_t, \; t\geqslant 0 \big)$ where $U$ is a standard gaussian random vector independent from $(W_t,\, t\geqslant 0)$, let $\nu$ be its distribution.
The density of $X_{(T)}$ with respect to $\nu$ is the function $f_T(\cdot,\theta)\,:\,\mathcal{X}_T\to\R$
\begin{multline*}
f_T(X_{(T)},\theta)  = 2 \sqrt{\det Q(\theta)} \times \\
 \quad \exp\left\{ \alpha T - \frac{1}{2}(X'_0 Q(\theta) X_0 + X'_T Q(\theta) X_T) - \frac{1}{2}\int_0^T \| Q(\theta) X_t \|^2 \, dt \right\},
\end{multline*}
where we used results of chapter III of \cite{jacod1987}, in particular sections~4 and 5.

We now calculate the Fisher information matrix corresponding to $X_{(T)}$. It is the opposite of the expectation of the Hessian matrix of the log-likelihood.
\[
I_T(\theta) = - \E_\theta [ \H_\theta \log (f_T(X_{(T)},\theta)) ].
\]
The log-likelihood is
\begin{align*}
\log (f_T(X_{(T)},\theta)) & = \log 2 + \frac{1}{2}\log(\det Q(\theta))\\
& \quad + \alpha T - \frac{1}{2}(X'_0 Q(\theta) X_0 + X'_T Q(\theta) X_T)\\
& \quad - \frac{1}{2}\int_0^T \| Q(\theta) X_t \|^2 \, dt\\
& = \log 2 + \frac{1}{2}\log(\alpha^2 - \beta^2)\\
& \quad + \alpha T - \frac{1}{2}(X'_0 + X'_T) (\alpha I_2 + \beta A) (X_0 + X_T)\\
& \quad - \frac{1}{2}\int_0^T \| Q(\theta) X_t \|^2 \, dt.
\end{align*}
And
\[
\| Q(\theta) X_t \|^2 = \alpha^2 \|X_t \|^2 + 2\alpha\beta X'_t A X_t + \beta^2 \|X_t \|^2.
\]
We deduce
\[
I_T(\theta) = -\frac{1}{2} \E_\theta \left[\H_\theta \log (\alpha^2 - \beta^2) \right]
+ \frac{T}{2} \E_\theta \left[\H_\theta \| Q(\theta) X_0 \|^2 \right]
= I_0(\theta) + T I(\theta).
\]
With
\[
I(\theta) = \frac{1}{2} \E_\theta \left[\H_\theta \left( \alpha^2 \|X_0 \|^2 + 2\alpha\beta X'_0 A X_0 + \beta^2 \|X_0 \|^2 \right) \right].
\]
Using the expression of the variance of $X_T$ (\ref{variance_X_T}) we deduce,
\[
I(\theta) = Q(\theta)^{-1}.
\]

Let $T>0$ and $h>0$, we consider the problem of predicting $X_{T+h}$ given $X_{(T)}$. Since $(X_t)_{t\geqslant 0}$ is a Markov process, it holds
\[
\E_\theta[X_{T+h}|X_{(T)}] = \E_\theta[X_{T+h}|X_T] =
r(X_T,\theta) = e^{-h Q(\theta)}X_T.
\]
The formula to differentiate a matrix exponential $e^{M(\theta)}$ with respect to $\theta$ is
\[
\frac{\partial\, e^{M(\theta)}}{\partial\theta_i}
= \int_0^1 e^{u M(\theta)} \, \frac{\partial M(\theta)} {\partial\theta_i} \, e^{(1-u)M(\theta)}\,du.
\]
The partial derivatives of $r$ with respect to the components of $\theta$ are
\begin{align*}
\frac{\partial}{\partial\alpha} r(X_T,\theta)
& = -h e^{-h Q(\theta)} X_T,\\
\frac{\partial}{\partial\beta} r(X_T,\theta)
& = -h A e^{-h Q(\theta)} X_T
= -h e^{-h Q(\theta)} A X_T,
\end{align*}
because $A$ and $e^{-h Q(\theta)}$ commute. Let
\[
M_T =
\begin{pmatrix}
X_{1,T} & X_{2,T}\\
X_{2,T} & X_{1,T}
\end{pmatrix}
= X_{1,T} I_2 + X_{2,T} A
\]
with $X_{1,T}$ and $X_{2,T}$ the two components of $X_T$. Then
\[
\J_\theta \left[r(X_T,\theta)\right] = -h e^{-hQ(\theta)} M_T.
\]

\subsubsection{Asymptotic efficiency}\label{section_asympt_eff_OU}

Let
\[
U_T = \J_\theta \left[r(X_T,\theta)\right] = -h e^{-hQ(\theta)} M_T.
\]

To make notations lighter we note $U$, $M$, $X_{(1)}$ and $X_{(2)}$ independent copies of $U_T$, $M_T$, $X_{1,T}$ and $X_{2,T}$ respectively.

Let $\widehat{\theta}_T$ be the MLE of $\theta$. One wants to predict $X_{T+h}$ with the plug-in predictor
\[
\widehat{r}_T(X_T) = r(X_T,\widehat{\theta}_T)
= e^{-hQ(\widehat{\theta}_T)} X_T.
\]
The MLE $\widehat{\theta}_T$ satisfies 
\[
V_T = \sqrt{T}(\widehat{\theta}_T-\theta)\, \underset{T\to\infty}{\Longrightarrow} \,
V \sim \mathcal{N}(0,I(\theta)^{-1}).
\]
It holds
\begin{align*}
\nu_T(\theta)
&= \int_{\R^2} \big( \J_\theta r(z,\theta) \big) \,I_T(\theta)^{-1}\, \big( \J_\theta r(z,\theta) \big)' \, d\mu_\theta(z)\\
&= \E_\theta\left[U I_T(\theta)^{-1} U'\right]\\
&= \E_\theta\left[U I_0(\theta)^{-1} U'\right] + T \nu_*(\theta),
\end{align*}
with
\[
\nu_*(\theta) = \E_\theta\left[U I(\theta)^{-1} U'\right] = \int_E (\J_\theta r(x,\theta)) I(\theta)^{-1} (\J_\theta r(x,\theta))' d\mu_\theta(x).
\]
Then
\[
\left\|\nu_T(\theta) - \frac{\nu_*(\theta)}{T} \right\|_{\M_2} = o\big(\left\|\nu_T(\theta)\right\|_{\M_2}\big).
\]
We calculate $\nu_*(\theta)$
\[
\nu_*(\theta) = h^2 e^{-h Q(\theta)} 
\E_\theta\left[M I(\theta)^{-1} M'\right]
 e^{-h Q(\theta)}, 
\]
and
\begin{align*}
\E_\theta[M I(\theta)^{-1} M']
&= \E_\theta\left[ \big(X_{(1)} I_2 + X_{(2)} A\big)
\big(\alpha I_2 + \beta A\big)
\big(X_{(1)} I_2 + X_{(2)} A\big)\right]\\
&= \E_\theta \Big[
\big(\alpha(X_{(1)}^2+X_{(2)}^2) + 2\beta X_{(1)} X_{(2)}\big) I_2\\
& \quad \quad \quad +\big(\beta(X_{(1)}^2+X_{(2)}^2) + 2 \alpha X_{(1)} X_{(2)}\big) A
\Big]\\
&= \frac{1}{2(\alpha^2-\beta^2)}
\begin{pmatrix}
2\alpha^2-2\beta^2 & 2\alpha\beta - 2\alpha\beta \\
2\alpha\beta - 2\alpha\beta & 2\alpha^2-2\beta^2
\end{pmatrix}
= I_2,
\end{align*}
hence $\E_\theta[M I(\theta)^{-1} M'] = I_2$, therefore
\[
\nu_*(\theta) = h^2 e^{-2h Q(\theta)}.
\]

The QER with respect to $\mu_\theta = \mathcal{N}\big(0,\frac{Q(\theta)^{-1}}{2}\big)$ is
\[
\rho_T(\theta) =
\E_\theta\left[(e^{-h Q(\widehat{\theta}_T)} - e^{-h Q(\theta)})
\E_\theta (X_T X_T')
(e^{-h Q(\widehat{\theta}_T)} -
 e^{-h Q(\theta)})'\right],
\]
hence
\[
\rho_T(\theta) =
\frac{1}{2}
\E_\theta\left[(e^{-h Q(\widehat{\theta}_T)} - e^{-h Q(\theta)})
Q(\theta)^{-1}
(e^{-h Q(\widehat{\theta}_T)} -
 e^{-h Q(\theta)})'\right].
\]
For all $\theta\in\Theta$, it holds
\[
Q(\theta) = P D(\theta) P^{-1}
\quad
\text{and}
\quad
e^{-hQ(\theta)} = P e^{-h D(\theta)} P^{-1}
\]
with
\[
P =
\begin{pmatrix}
1 & 1\\
1 & -1
\end{pmatrix}
\quad
\text{et}
\quad
D(\theta) =
\begin{pmatrix}
\alpha + \beta & 0\\
0 & \alpha - \beta
\end{pmatrix}.
\]
The QER is
\begin{align*}
\rho_T(\theta)
&= \frac{1}{2} \E_\theta\left[
P (e^{-hD(\widehat{\theta}_T)} - e^{-hD(\theta)}) P^{-1} Q(\theta)^{-1} P (e^{-hD(\widehat{\theta}_T)} - e^{-hD(\theta)}) P^{-1}\right]\\
&= \frac{P}{2} \E_\theta\left[
(e^{-hD(\widehat{\theta}_T)} - e^{-hD(\theta)}) D(\theta)^{-1} (e^{-hD(\widehat{\theta}_T)} - e^{-hD(\theta)})\right]P^{-1}\\
&= \frac{P}{2}
\begin{pmatrix}
\frac{\E_\theta\big(e^{-h(\widehat{\alpha}_T + \widehat{\beta}_T)} - e^{-h(\alpha+\beta)}\big)^2} {\alpha+\beta} &
0\\
0 &
\frac{\E_\theta\big(e^{-h(\widehat{\alpha}_T - \widehat{\beta}_T)} - e^{-h(\alpha-\beta)}\big)^2}{\alpha-\beta}
\end{pmatrix}
P^{-1}.
\end{align*}
We consider the reparameterization
\[
\xi=\begin{pmatrix} \eta \\ \gamma \end{pmatrix}
=
\begin{pmatrix} e^{-h(\alpha+\beta)} \\ e^{-h(\alpha-\beta)} \end{pmatrix},
\]
then the MLE of $\xi$ is
\[
\widehat{\xi}_T=\begin{pmatrix} \widehat{\eta}_T \\ \widehat{\gamma}_T \end{pmatrix}
=
\begin{pmatrix} e^{-h(\widehat{\alpha}_T+\widehat{\beta}_T)} \\ e^{-h(\widehat{\alpha}_T-\widehat{\beta}_T)} \end{pmatrix}.
\]
It holds
\[
I(\xi)^{-1} = \big(\J_\theta\xi(\theta)\big) I(\theta)^{-1} \big(\J_\theta\xi(\theta)\big)',
\]
\[
\J_\theta\xi(\theta) = -h
\begin{pmatrix}
e^{-h(\alpha+\beta)} & e^{-h(\alpha+\beta)} \\
e^{-h(\alpha-\beta)} & -e^{-h(\alpha-\beta)}
\end{pmatrix},
\]
hence
\[
I(\xi)^{-1} = 2h^2
\begin{pmatrix}
(\alpha+\beta)e^{-2h(\alpha+\beta)} & 0 \\
0 & (\alpha-\beta)e^{-2h(\alpha-\beta)}
\end{pmatrix}.
\]
The estimator $\widehat{\xi}_T$ satisfies 
\[
T \E_\theta \big(\widehat{\xi}_T-\xi\big) \big(\widehat{\xi}_T-\xi\big)'
\, \xrightarrow[T\to\infty]{} \, I(\xi)^{-1},
\]
hence
\begin{align*}
T \rho_T(\theta) \xrightarrow[T\to\infty]{}
\; & h^2 P
\begin{pmatrix}
e^{-2h(\alpha+\beta)} & 0 \\
0 & e^{-2h(\alpha-\beta)}
\end{pmatrix}
P^{-1}
= h^2 P e^{-2h D(\theta)} P^{-1} \\
& = h^2 e^{-2hQ(\theta)} = \nu_*(\theta).
\end{align*}
Thus $r(\cdot,\widehat{\theta}_T)$ is an asymptotically efficient estimator of $r(\cdot,\theta)$ for the QER with respect to $\mu_\theta = \mathcal{N}\big(0,\frac{Q(\theta)^{-1}}{2}\big)$.

Applying Theorem 2.8 p.121 of \cite{kutoyants2004}, for all $\theta \in \Theta$, there is a compact neighbourhood $V(\theta)$ of $\theta$, such that the following convergence holds uniformly over $V(\theta)$,
\[
\sqrt{T}(e^{-hQ(\widehat{\theta}_T)}x - e^{-hQ(\cdot)}x) \underset{T\to\infty}{\Longrightarrow} \mathcal{N}(0, (\J_\theta (e^{-hQ(\theta)}x)) I(\theta)^{-1} (\J_\theta (e^{-hQ(\theta)}x))'),
\]
where the convergence in distribution is taken with respect to $\P_\theta^T$, the distribution of $X_{(T)}$.
Moreover the model is LAN (see \cite{kutoyants2004}). Hence applying Proposition~\ref{prop_estim_reg}, we deduce that the family of estimators $\left(e^{-h Q( \widehat{\theta}_T) }x, T\geqslant 0 \right)$ is regular. Therefore Assumption~\ref{hypo_LAN_1} is fulfilled.

We conclude that the results pertaining to the estimation of the regression function we have seen in this section apply to the problem of forecasting of the bivariate stationary Ornstein-Uhlenbeck process that we consider here, and the plug-in estimator $r(\cdot,\widehat{\theta}_T)$ is asymptotically efficient. We will see in Section~\ref{chapitre_prediction_asymptotique} the extension of the results to the prediction problem, in particular we will verify Assumption~\ref{hypo_EQR_EQP_equiv_LAN}.

\section{Limits of risks for prediction and regression}\label{chapitre_prediction_asymptotique}

In this section we give conditions under which the QEP and the QER are asymptotically equivalent.
This asymptotic equivalence has been used in the previous section to deduce an asymptotic bound on QEP from an asymptotic bound on QER.
Here we consider the plug-in estimators of the regression function,
$\widehat{r}_T = r(\cdot,\widehat{\theta}_T)$
where $\widehat{\theta}_T$ is an estimator of $\theta$ based on $X_{(T)}$, and we consider the corresponding predictors, of the form $\widehat{r}_T(X_T) = r(X_T,\widehat{\theta}_T)$. The QER will always be taken with respect to $\mu_{\theta} = \P_{\theta, X_T}$ the distribution of $X_T$.

\subsection{Assumptions and lemma}

The QEP of the predictor $r(X_T,\widehat{\theta}_T)$ is
\[
R_T(\theta) = \E_\theta\big(r(X_T,\widehat{\theta}_T) - r(X_T,\theta)\big)^{\times 2},
\quad\theta\in\Theta.
\]
In order to compare $R_T(\theta)$ with $\rho_T(\theta)$, we consider an auxiliary predictor $r(X_T,\widehat{\theta}_{S(T)})$ and the corresponding estimator of the regression function $r(\cdot,\widehat{\theta}_{S(T)})$ where $\widehat{\theta}_{S(T)}$ is based on $X_{(S(T))}$ with $S:\R_+ \to \R_+$ a function such that $S(T) \leqslant T$ for all $T\geqslant 0$ and $S(T) \sim T$, as $T\to\infty$. In what follows, $S$ will always represent $S(T)$, omitting the argument $T$ in order to make notations lighter.

We introduce $R_T^S(\theta)$ the QEP of the predictor $r(X_T,\widehat{\theta}_S)$, and $\rho_T^S(\theta)$ the QER of the estimator $r(\cdot,\widehat{\theta}_S)$. These quantities are
\begin{align*}
R_T^S(\theta) & = \E_\theta \left( r(X_T,\widehat{\theta}_S) - r(X_T,\theta) \right)^{\times 2},\\
\rho_T^S(\theta) & = \int_E \E_\theta \left( r(x,\widehat{\theta}_S) - r(x,\theta) \right)^{\times 2} d\mu_{\theta}(x).
\end{align*}

For all $T\geqslant 0$, we define the function $\bar{\theta}_S : \mathcal{X}_S \to \Theta$, such that $\bar{\theta}_S(X_{(S)}) = \widehat{\theta}_S$, where $\mathcal{X}_S$ is the space of the paths of $X_{(S)}$ equipped with the smallest $\sigma$-field that makes the coordinate applications continuous.
Let
\[
\Delta r (x,\xi) = r(x, \bar{\theta}_S(\xi)) - r(x,\theta),
\quad\forall x\in E,\, \xi\in\mathcal{X}_S,
\]
where the dependence of $\Delta r$ in $T$ and $\theta$ is left implicit, to make notations lighter.

The distribution $\P_{\theta, (X_T,X_{(S)})}$ of $(X_T,X_{(S)})$, is assumed to be dominated by a $\sigma$-finite measure $\lambda$ and $f_{\theta, X_T}$, $f_{\theta, X_{(S)}}$ and $f_{\theta,(X_T,X_{(S)})}$ represent the densities of $X_T$, $X_{(S)}$ and $(X_T,X_{(S)})$ respectively.
We can then write $R_T^S (\theta)$ and $\rho_T^S (\theta)$ in the following way,
\begin{align*}
R_T^S (\theta) 
& = \int_{E\times \mathcal{X}_S} (\Delta r(x,\xi))^{\times 2} d\P_{\theta, (X_T,X_{(S)})}(x,\xi)\\
& = \int_{E\times \mathcal{X}_S} (\Delta r(x,\xi))^{\times 2} f_{\theta,(X_T,X_{(S)})} d\lambda(x,\xi),
\end{align*}
and
\begin{align*}
\rho_T^S (\theta) 
& = \int_{E\times \mathcal{X}_S} (\Delta r(x,\xi))^{\times 2} d\P_{\theta, X_T}(x) d\P_{\theta, X_{(S)}}(\xi) \\
& = \int_{E\times \mathcal{X}_S} (\Delta r(x,\xi))^{\times 2} f_{\theta,X_T}(x)f_{\theta,X_{(S)}}(\xi)d\lambda(x,\xi).
\end{align*}
And let
\[
\Delta f(x,\xi) = | f_{\theta,(X_T,X_{(S)})}(x,\xi) - f_{\theta,X_T}(x)f_{\theta,X_{(S)}}(\xi) |,
\]
letting the dependence of $\Delta f$ in $T$ and $\theta$ implicit.

We measure the dependence of $X_T$ and $X_{(S)}$ with the coefficient
\[
\bar{\beta}(S,T) = \int_{E\times \mathcal{X}_S} (\Delta f)d\lambda.
\]

\begin{remarque}
This coefficient is bounded from above by the usual $\beta$-mixing coefficient. It holds $\bar{\beta} (S,\,T) \leqslant 2 \beta(T-S)$ with
\[
\beta(t) =
\sup_{s>0} \left\| \P_{0,\,\theta}^s\otimes\P_{s+t,\,\theta}^\infty - \P_{s,\,t,\,\theta} \right\|_{\mathrm{TV}}
\]
where $\P_{0,\,\theta}^{s}$ is the distribution of $(X_u, \; 0\leqslant u\leqslant s)$, $\P_{s+t,\,\theta}^\infty$ the distribution of $(X_u, \; u\geqslant s+t)$, $\P_{s,\,t,\,\theta}$ the joint distribution of $\big( (X_u)_{0\leqslant u\leqslant s},$ $(X_u)_{u\geqslant s+t} \big)$, and $\|\cdot\|_{\mathrm{TV}}$ the total variation norm for signed measures, \emph{i.e.} if $\mu$ is a signed measure on a measurable space $\mathcal{A}$, then $\|\mu\|_{\mathrm{TV}} = \sup_{A\in\mathcal{A}}|\mu(A)|$. Intuitively, the coefficient $\bar{\beta}(S,T)$ is a way to quantify the dependence between $X_{(S)}$ and $X_T$. A reference on mixing coefficients is \cite{doukhan1994}. There are other approaches to measure the dependence between the past and the future, see \cite{dedecker2007}.
\end{remarque}

We now make the following assumption.

\begin{hypo}\label{hypo_beta_tilde}{\ }
\begin{enumerate}
\item $\exists m \in (2,\infty]\: : \: \delta_{\theta,m} = \sup_{S,T} \|\Delta r\|_{\L^m((\Delta f)\lambda)} < \infty$,
\item $\bar{\beta}(S,T) = o\left(T^\frac{-m}{m-2}\right)$, as $T\to\infty$.
\end{enumerate}
\end{hypo}

\begin{lemme}\label{lemme_beta}
Under Assumption~\ref{hypo_beta_tilde}, if the limit $\lim_{T\to\infty} T \rho_T^S$ exists and is non zero then,
\[
\lim_{T\to\infty} T \rho_T^S = \lim_{T\to\infty} T R_T^S.
\]
\end{lemme}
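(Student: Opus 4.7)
The plan is to bound the Frobenius-norm distance between $R_T^S(\theta)$ and $\rho_T^S(\theta)$ by a quantity that is $o(1/T)$, so that $TR_T^S$ and $T\rho_T^S$ must share the same limit. Everything I need is already assembled in the two integral representations of $R_T^S$ and $\rho_T^S$ displayed just above the statement, and the real tool is one application of H\"older's inequality with exponents calibrated to match the assumption.

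First, subtracting those two representations gives
\[
R_T^S(\theta) - \rho_T^S(\theta) = \int_{E\times\mathcal{X}_S}(\Delta r(x,\xi))^{\times 2}\bigl[f_{\theta,(X_T,X_{(S)})}(x,\xi) - f_{\theta,X_T}(x)f_{\theta,X_{(S)}}(\xi)\bigr]\,d\lambda(x,\xi).
\]
Since $\|vv'\|_{\M_k} = \|v\|^2$ and the Frobenius norm satisfies the triangle inequality for matrix-valued integrals against a signed measure, this yields
\[
\bigl\|R_T^S(\theta) - \rho_T^S(\theta)\bigr\|_{\M_k} \leq \int_{E\times\mathcal{X}_S}\|\Delta r(x,\xi)\|^2\,\Delta f(x,\xi)\,d\lambda(x,\xi).
\]

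Next I would write $\Delta f = \Delta f^{2/m}\cdot \Delta f^{(m-2)/m}$ and apply H\"older's inequality with conjugate exponents $m/2$ and $m/(m-2)$ (treating $m=\infty$ as the obvious limit case with exponents $\infty$ and $1$):
\[
\int \|\Delta r\|^2\,\Delta f\,d\lambda \leq \Bigl(\int \|\Delta r\|^m\,\Delta f\,d\lambda\Bigr)^{2/m}\Bigl(\int \Delta f\,d\lambda\Bigr)^{(m-2)/m} \leq \delta_{\theta,m}^{2}\,\bar\beta(S,T)^{(m-2)/m},
\]
by item~1 of Assumption~\ref{hypo_beta_tilde} and the very definition of $\bar\beta(S,T)$.

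Finally, multiplying by $T$ and invoking item~2 of Assumption~\ref{hypo_beta_tilde} gives $T\,\bar\beta(S,T)^{(m-2)/m}\to 0$, so $T\|R_T^S - \rho_T^S\|_{\M_k}\to 0$; combined with the assumed existence of $\lim_{T\to\infty}T\rho_T^S$, this forces $TR_T^S$ to converge to the same limit. There is no real obstacle: the proof is essentially one inequality, and the only point that requires care is the matching of the H\"older exponent $m/(m-2)$ with the decay rate in item~2 of Assumption~\ref{hypo_beta_tilde}, the latter having been tailored precisely so that $T\cdot\bar\beta(S,T)^{(m-2)/m}$ vanishes.
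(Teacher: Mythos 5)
Your proof is correct and is essentially the argument the paper intends: the paper defers to Lemma~3.1 of \cite{bosq2012}, whose proof is exactly this bound of $\|R_T^S-\rho_T^S\|_{\M_k}$ by $\int\|\Delta r\|^2\,\Delta f\,d\lambda$ followed by H\"older with exponents $m/2$ and $m/(m-2)$, the two items of Assumption~\ref{hypo_beta_tilde} being calibrated precisely so that $T\,\delta_{\theta,m}^2\,\bar\beta(S,T)^{(m-2)/m}\to 0$. Your identification of $\|vv'\|_{\M_k}=\|v\|^2$ and the handling of the $m=\infty$ case are both fine, and (as you implicitly noticed) the non-vanishing of the limit is not actually needed for the conclusion.
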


The proof is similar to the proof of Lemma~3.1 in \cite{bosq2012}.

We now state a lemma which will be useful for the proof of Theorem~\ref{risk_equivalence_multi_d}.

\begin{lemme}\label{norm_ineq}
Let $U$, $V$ and $W$ be column vectors in $\R^k$, then
\[
\|(U-V)^{\times 2} - (W-V)^{\times 2}\|_{\M_k}
\;\leqslant\;
\|U-W\|_{\R^k}^2 + 2 \|U-W\|_{\R^k} \|W-V\|_{\R^k}
\]
\end{lemme}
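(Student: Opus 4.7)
The plan is to reduce the problem to a simple identity about rank-one matrices. Set $c = U-W$ and $b = W-V$, so that $U-V = b+c$. Expanding the outer product,
\[
(U-V)^{\times 2} - (W-V)^{\times 2} = (b+c)(b+c)' - bb' = bc' + cb' + cc'.
\]
By the triangle inequality for the Frobenius norm,
\[
\|(U-V)^{\times 2} - (W-V)^{\times 2}\|_{\M_k}
\;\leqslant\; \|bc'\|_{\M_k} + \|cb'\|_{\M_k} + \|cc'\|_{\M_k}.
\]

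Next I would use the key observation that for any column vectors $x,y \in \R^k$ the Frobenius norm of the rank-one matrix $xy'$ factorizes: writing out the entries gives
\[
\|xy'\|_{\M_k}^2 = \sum_{i,j} x_i^2 y_j^2 = \|x\|_{\R^k}^2 \|y\|_{\R^k}^2,
\]
so $\|xy'\|_{\M_k} = \|x\|_{\R^k} \|y\|_{\R^k}$. Applied to each of the three terms this yields
\[
\|(U-V)^{\times 2} - (W-V)^{\times 2}\|_{\M_k}
\;\leqslant\; 2\|b\|_{\R^k}\|c\|_{\R^k} + \|c\|_{\R^k}^2,
\]
and substituting back $c = U-W$ and $b = W-V$ gives exactly the claimed bound.

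There is no real obstacle here; the only thing to be careful about is the algebraic expansion in the first step (writing $U-V$ as $b+c$ rather than trying to manipulate $aa'-bb'$ directly, which is what makes the cross terms come out with the correct signs) and remembering the multiplicative identity for the Frobenius norm of outer products.
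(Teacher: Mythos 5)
Your proof is correct and follows essentially the same route as the paper's: both decompose $U-V=(U-W)+(W-V)$, expand the outer product to isolate the cross terms, and then apply the triangle inequality together with the identity $\|xy'\|_{\M_k}=\|x\|_{\R^k}\|y\|_{\R^k}$ for rank-one matrices. No gaps.
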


\begin{proof}
\begin{align*}
(U-V)^{\times 2} - (W-V)^{\times 2}
& = \big( (U-W) + (W-V) \big)^{\times 2} - (W-V)^{\times 2} \\
& = (U-W)^{\times 2} + (U-W)(W-V)' \\
& \quad + (W-V)(U-W)'.
\end{align*}
Hence
\begin{align*}
\| (U-V)^{\times 2} - (W-V)^{\times 2} \|_{\M_k}
& \leqslant \| (U-W)^{\times 2} \|_{\M_k} + \| (U-W)(W-V)' \|_{\M_k} \\
& \quad \; + \| (W-V)(U-W)' \|_{\M_k} \\
& \leqslant \| U-W \|_{\R^k}^2 + 2 \|U-W\|_{\R^k} \|W-V\|_{\R^k}.
\end{align*}
\end{proof}

\begin{hypo}\label{hypo_theta_T_theta_S}
For all $\theta\in\Theta$, $\lim_{T\to\infty}T \E_\theta \| \widehat{\theta}_T-\widehat{\theta}_S \|_{\R^d}^2 = 0$.
\end{hypo}

\begin{remarque}
Proposition~\ref{prop_thetaST_limit_LAN}, which we will see further, gives conditions under which Assumption~\ref{hypo_theta_T_theta_S} is fulfilled.
\end{remarque}

Finally we make the following assumption.

\begin{hypo}\label{hypo_rho_T_S}
For all $\theta \in \Theta$, the following limits exist and are equal, and let $R(\theta) \in \M_k$ such that,
\[
\lim_{T\to\infty} T \rho_T^S(\theta) = \lim_{T\to\infty} T \rho_T(\theta) = R(\theta).
\]
\end{hypo}

\begin{remarque}\label{rem_hypo_rho_T_S}
It is straightforward to see that Assumption~\ref{hypo_rho_T_S} is fulfilled for our problem of forecasting of the bivariate stationary Ornstein-Uhlenbeck process for $S = T - \sqrt{T}$, replacing $\widehat{\theta}_T$ by $\widehat{\theta}_S$ in the derivation we made in Section~\ref{section_asympt_eff_OU} to calculate the limit of $\rho_T(\theta)$.
\end{remarque}

\subsection{Asymptotic equivalence of QEP and QER}\label{section_asympt_eq_QEP_QER}

\subsubsection{Theorem of asymptotic equivalence of the risks}

The two following theorems give conditions for asymptotic equivalence between QEP and QER. They are a generalization of Propositions~3.1 and 3.2 in \cite{bosq2012} from the univariate to multivariate case.

\begin{theoreme}\label{risk_equivalence_multi_d}
Under Assumptions~\ref{hypo_beta_tilde}, \ref{hypo_theta_T_theta_S} and \ref{hypo_rho_T_S}, if there exists a deterministic constant $C > 0$ telle que
\begin{equation}\label{condition_risk_eq_multi_d}
\|r(X_T,\theta^*) - r(X_T,\theta)\|_{\R^k} \leqslant C \, \|\theta^* - \theta \|_{\R^d},
\quad\forall\theta,\,\theta^*\in\Theta,
\end{equation}
then
$
\lim_{T\to\infty} T R_T = \lim_{T\to\infty} T \rho_T.
$
\end{theoreme}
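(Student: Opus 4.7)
The plan is to insert the auxiliary quantity $R_T^S$ and split the difference $R_T - \rho_T$ into two pieces controlled by separate assumptions: the piece $R_T^S - \rho_T^S$, handled by the $\bar{\beta}$-mixing argument, and the piece $R_T - R_T^S$, handled by the Lipschitz estimate together with the closeness of $\widehat{\theta}_T$ and $\widehat{\theta}_S$.

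First I would apply Lemma~\ref{lemme_beta}: Assumption~\ref{hypo_beta_tilde} is available, and Assumption~\ref{hypo_rho_T_S} supplies the existence of $\lim_{T\to\infty} T \rho_T^S(\theta) = R(\theta)$ (when $R(\theta)\neq 0$; if $R(\theta)=0$ one can bypass the lemma by a direct bound on $\bar{\beta}$-terms, which is also what the lemma effectively proves). This yields
\[
\lim_{T\to\infty} T R_T^S(\theta) = \lim_{T\to\infty} T \rho_T^S(\theta) = R(\theta).
\]

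Next I would show $T\bigl(R_T(\theta) - R_T^S(\theta)\bigr) \to 0$ in $\M_k$. Setting $U = r(X_T,\widehat{\theta}_T)$, $W = r(X_T,\widehat{\theta}_S)$ and $V = r(X_T,\theta)$, Lemma~\ref{norm_ineq} gives pointwise
\[
\bigl\|(U-V)^{\times 2} - (W-V)^{\times 2}\bigr\|_{\M_k}
\leqslant \|U-W\|_{\R^k}^2 + 2\|U-W\|_{\R^k}\,\|W-V\|_{\R^k}.
\]
Taking expectation and using $\|\E_\theta Z\|_{\M_k} \leqslant \E_\theta\|Z\|_{\M_k}$,
\[
\bigl\|R_T(\theta) - R_T^S(\theta)\bigr\|_{\M_k}
\leqslant \E_\theta\|U-W\|_{\R^k}^2 + 2\E_\theta\bigl[\|U-W\|_{\R^k}\,\|W-V\|_{\R^k}\bigr].
\]
The Lipschitz hypothesis (\ref{condition_risk_eq_multi_d}) gives $\|U-W\|_{\R^k}\leqslant C\|\widehat{\theta}_T-\widehat{\theta}_S\|_{\R^d}$, so by Assumption~\ref{hypo_theta_T_theta_S} the first term is $C^2\E_\theta\|\widehat{\theta}_T-\widehat{\theta}_S\|^2 = o(1/T)$. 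For the cross term, Cauchy--Schwarz gives
\[
\E_\theta\bigl[\|U-W\|\,\|W-V\|\bigr]
\leqslant \bigl(C^2 \E_\theta\|\widehat{\theta}_T-\widehat{\theta}_S\|^2\bigr)^{1/2}\bigl(\E_\theta\|W-V\|^2\bigr)^{1/2},
\]
and since $\E_\theta\|W-V\|^2 = \trace(R_T^S(\theta))$ with $T R_T^S(\theta)\to R(\theta)$, this factor is $O(T^{-1/2})$, while the other factor is $o(T^{-1/2})$. The product is $o(1/T)$, so $T\bigl\|R_T(\theta)-R_T^S(\theta)\bigr\|_{\M_k}\to 0$.

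Combining the two steps, $\lim_{T\to\infty} T R_T(\theta) = \lim_{T\to\infty} T R_T^S(\theta) = R(\theta) = \lim_{T\to\infty} T \rho_T(\theta)$, which is the claim. The main delicate point I anticipate is the matrix-norm bookkeeping in the multivariate case — specifically, passing from the Frobenius-norm inequality of Lemma~\ref{norm_ineq} back to a bound on $\|R_T - R_T^S\|_{\M_k}$ and matching the scalar $\E\|W-V\|^2$ with $\trace(R_T^S)$ so that the Cauchy--Schwarz cross term really closes at rate $o(1/T)$; a secondary subtlety is the potential $R(\theta)=0$ case in applying Lemma~\ref{lemme_beta}, which should be addressed either by appealing to the same mixing control directly or by noting that the conclusion is trivial in that degenerate case.
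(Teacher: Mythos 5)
Your proposal is correct and follows essentially the same route as the paper: the same decomposition through the auxiliary risk $R_T^S$, Lemma~\ref{lemme_beta} for the first piece, and Lemma~\ref{norm_ineq} plus Cauchy--Schwarz with the identification $\E_\theta\|W-V\|_{\R^k}^2=\trace(R_T^S(\theta))$ for the second. Your side remark about the degenerate case $R(\theta)=0$ in Lemma~\ref{lemme_beta} is a fair observation that the paper's own proof does not address explicitly, but it does not alter the argument.
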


\begin{proof}
Assumption~\ref{hypo_rho_T_S} entails
\[
\lim_{T\to\infty} T \rho^S_T(\theta) = \lim_{T\to\infty} T \rho_T(\theta) = R(\theta).
\]
Assumption~\ref{hypo_beta_tilde} and Lemma~\ref{lemme_beta} give
$
\lim_{T\to\infty} T R^S_T(\theta) = \lim_{T\to\infty} T \rho^S_T(\theta)
$,
hence
\[
\lim_{T\to\infty} T R^S_T(\theta) = \lim_{T\to\infty} T \rho_T(\theta).
\]
Condition (\ref{condition_risk_eq_multi_d}) implies
\begin{equation*}
T \E_\theta \|r(X_T,\widehat{\theta}_T) - r(X_T,\widehat{\theta}_S)\|_{\R^k}^2 \leqslant C^2 T \E_\theta \| \widehat{\theta}_T - \widehat{\theta}_S \|_{\R^d}^2
\end{equation*}
Combining with Assumption~\ref{hypo_theta_T_theta_S} we obtain
\begin{equation}\label{limit_rTthetaST}
T \E_\theta \|r(X_T,\widehat{\theta}_T) - r(X_T,\widehat{\theta}_S)\|_{\R^k}^2 \xrightarrow[T\to\infty]{} 0
\end{equation}
Let $U=r(X_T,\widehat{\theta}_T)$, $V=r(X_T,\theta)$ and $W=r(X_T,\widehat{\theta}_S)$, then using Lemma~\ref{norm_ineq},
\begin{align*}
\big\|T(R_T - R_T^S)\big\|_{\M_k}
&=  T \big\| \E_\theta \big[ (U-V)^{\times 2} - (W-V)^{\times 2} \big]
\big\|_{\M_k}\\
&\leqslant T \E_\theta \big\| 
(U-V)^{\times 2} - (W-V)^{\times 2}
\big\|_{\M_k}\\
&\leqslant T \E_\theta \left(
\|U-W\|_{\R^k}^2 + 2 \|U-W\|_{\R^k} \|W-V\|_{\R^k} \right)\\
&\leqslant T \E_\theta \|U-W\|_{\R^k}^2\\
& \quad  \quad + 2 \left( T \E_\theta \|U-W\|_{\R^k}^2 \, T \E_\theta \|W-V\|_{\R^k}^2 \right)^{1/2}.
\end{align*}
Using (\ref{limit_rTthetaST}) we get $ T \E_\theta \|U-W\|_{\R^k}^2 \xrightarrow[T\to\infty]{} 0$. And moreover
$$
T \E_\theta \|W-V\|_{\R^k}^2 = \trace\big(T R_T^S(\theta)\big) \xrightarrow[T\to\infty]{} \trace\big(R(\theta)\big).
$$
Therefore
\[
\big\|T(R_T(\theta) - R_T^S(\theta))\big\|_{\M_k}
\xrightarrow[T\to\infty]{} 0.
\]
\end{proof}

Condition (\ref{condition_risk_eq_multi_d}) is somewhat restrictive. For instance it is not fulfilled by our problem of forecasting of an Ornstein-Uhlenbeck process. The following result hinges upon less restrictive conditions.

\begin{theoreme}\label{prop_risk_eq_bis}
We assume the following conditions hold.
\begin{enumerate}
\item $T^2\E_\theta \|\widehat{\theta}_T - \widehat{\theta}_S \|_{\R^d}^4 = \bigo(1)$, as $T\to\infty$.
\item There is a measurable function $\ell : E \to \R_+$ such that for all $\theta, \, \theta^* \in \Theta$,
\begin{equation*}
\|r(X_T,\theta^*) - r(X_T,\theta)\|_{\R^k} \leqslant \ell(X_T) \| \theta^* - \theta \|_{\R^d}.
\end{equation*}
\item $\exists \nu>0$ such that $C_\theta = \sup_T \E_\theta\big(\ell^{4+\nu}(X_T)\big) < \infty$.
\end{enumerate}
Then, under Assumptions~\ref{hypo_beta_tilde}, \ref{hypo_theta_T_theta_S} and \ref{hypo_rho_T_S},
$
\lim_{T\to\infty} T R_T = \lim_{T\to\infty} T \rho_T.
$
\end{theoreme}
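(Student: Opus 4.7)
The plan is to mirror the proof of Theorem~\ref{risk_equivalence_multi_d}, replacing the deterministic Lipschitz control (\ref{condition_risk_eq_multi_d}) by the random one $\ell(X_T)$. Exactly as before, Assumption~\ref{hypo_rho_T_S} combined with Assumption~\ref{hypo_beta_tilde} and Lemma~\ref{lemme_beta} yields
\[
\lim_{T\to\infty} T R_T^S(\theta) = \lim_{T\to\infty} T \rho_T^S(\theta) = \lim_{T\to\infty} T \rho_T(\theta) = R(\theta).
\]
Setting $U = r(X_T,\widehat{\theta}_T)$, $V = r(X_T,\theta)$ and $W = r(X_T,\widehat{\theta}_S)$, Lemma~\ref{norm_ineq} gives
\[
\|T(R_T - R_T^S)\|_{\M_k} \leqslant T\,\E_\theta \|U-W\|_{\R^k}^2 + 2\bigl(T\,\E_\theta \|U-W\|_{\R^k}^2 \cdot T\,\E_\theta \|W-V\|_{\R^k}^2\bigr)^{1/2}.
\]
Since $T\,\E_\theta \|W-V\|_{\R^k}^2 = \trace(T R_T^S(\theta)) \to \trace R(\theta)$ stays bounded, the whole proof reduces to showing $T\,\E_\theta \|U-W\|_{\R^k}^2 \to 0$.

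The core step is therefore to control $T\,\E_\theta[\ell(X_T)^2 Z^2]$, where $Z = \|\widehat{\theta}_T - \widehat{\theta}_S\|_{\R^d}$; by the second hypothesis of the theorem, this quantity dominates $T\,\E_\theta \|U-W\|_{\R^k}^2$. I will apply H\"older's inequality with conjugate exponents $p = (4+\nu)/2$ and $q = (4+\nu)/(2+\nu)$, so that $2p = 4+\nu$ matches exactly the moment hypothesis on $\ell$ and $(\E_\theta \ell^{2p})^{1/p} \leqslant C_\theta^{1/p}$ by the third hypothesis. The conjugate exponent satisfies $2q = 2(4+\nu)/(2+\nu) \in (2,4)$, so log-convexity of $\L^r$-norms (i.e.\ H\"older with exponents $1/(2-q)$ and $1/(q-1)$) gives
\[
\E_\theta Z^{2q} \leqslant (\E_\theta Z^2)^{2-q}(\E_\theta Z^4)^{q-1}.
\]
Assumption~\ref{hypo_theta_T_theta_S} provides $\E_\theta Z^2 = o(T^{-1})$, and the first hypothesis of the theorem provides $\E_\theta Z^4 = \bigo(T^{-2})$; substituting, the exponents collapse neatly to $(\E_\theta Z^{2q})^{1/q} = o(T^{-1})$, hence
\[
T\,\E_\theta[\ell(X_T)^2 Z^2] \leqslant T\,C_\theta^{1/p} \cdot o(T^{-1}) = o(1).
\]

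With this key estimate in hand, the preceding Lemma~\ref{norm_ineq} display forces $\|T(R_T - R_T^S)\|_{\M_k} \to 0$, and combining with $T R_T^S(\theta) \to R(\theta)$ yields $\lim_{T\to\infty} T R_T(\theta) = \lim_{T\to\infty} T \rho_T(\theta) = R(\theta)$. The main obstacle is the calibration of the H\"older exponents: the argument only closes because $\nu>0$ lets $2q$ be strictly less than $4$, which is exactly what permits Assumption~\ref{hypo_theta_T_theta_S}'s faster rate $\E_\theta Z^2 = o(T^{-1})$ to enter the bound; without the extra $\nu$ of integrability for $\ell$ one would only retain the weaker $\E_\theta Z^4 = \bigo(T^{-2})$ rate, which by itself yields $T\,\E_\theta[\ell^2 Z^2] = \bigo(1)$ rather than $o(1)$ and is thus insufficient.
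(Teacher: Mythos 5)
Your proof is correct, and it follows exactly the route the paper intends: the paper does not print a proof of this theorem but defers to Proposition~3.2 of \cite{bosq2012}, stating that the generalization reduces to the skeleton of Theorem~\ref{risk_equivalence_multi_d} plus a treatment of the random Lipschitz factor $\ell(X_T)$, which is precisely what you supply via H\"older with exponents $(4+\nu)/2$ and $(4+\nu)/(2+\nu)$ followed by Lyapunov interpolation of $\E_\theta Z^{2q}$ between $\E_\theta Z^2=o(T^{-1})$ and $\E_\theta Z^4=\bigo(T^{-2})$. The exponent bookkeeping checks out, and your closing remark correctly identifies why $\nu>0$ is indispensable.
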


We refer to the proof of Proposition~3.2 in \cite{bosq2012}, where the result is proved for $\Theta \subset \R$ and $r$ taking its values in $\R$, the proof makes use of Proposition~3.1 in \cite{bosq2012}, which is the restriction of Theorem~\ref{risk_equivalence_multi_d} to the one-dimensional case. The generalization to the multidimensional case is straightforward, using Theorem~\ref{risk_equivalence_multi_d} instead of Proposition~3.1 in \cite{bosq2012}.

\subsubsection{Verification of Assumption~\ref{hypo_theta_T_theta_S}}

We now give a result that allows to verify Assumption~\ref{hypo_theta_T_theta_S}. We make the following assumption.

\begin{hypo}\label{hypo_prop_thetaST_limit_LAN}
Let $\theta\in\Theta$. Assume the family $(\P_\theta^T, \theta\in\Theta)$, $T\geqslant 0$, satisfies the following conditions.
\begin{enumerate}
\item The family $(\P_\theta^T, \theta\in\Theta)$, $T\geqslant 0$, is LAN.
\item For all $\eta>0$, $\lim_{T\to\infty} \P_\theta^T \left( \| \Delta_T(\theta) - \frac{\sqrt{T}}{\sqrt{S}} \Delta_S(\theta) \|_{\R^d}^2 > \eta \right) = 0$.
\end{enumerate}
The family of estimators $(\widehat{\theta}_T, T\geqslant 0)$, satisfies the following conditions.
\begin{enumerate}\setcounter{enumi}{2}
\item The family $(\widehat{\theta}_T, T\geqslant 0)$, is regular at $\theta$.
\item $\P_\theta^T \circ \sqrt{T} (\widehat{\theta}_T - \theta) \underset{T\to\infty}{\Longrightarrow} \mathcal{N}(0,I(\theta)^{-1})$
\item $\lim_{T\to\infty} T \E_\theta \| \widehat{\theta}_T - \theta \|_{\R^d}^2 = \trace(I(\theta)^{-1})$
\end{enumerate}
\end{hypo}

\begin{proposition}\label{prop_thetaST_limit_LAN}
Let $\theta\in\Theta$. Under Assumption~\ref{hypo_prop_thetaST_limit_LAN},
\[
\lim_{T\to\infty}T \E_\theta \| \widehat{\theta}_T-\widehat{\theta}_S \|_{\R^d}^2 = 0.
\]
\end{proposition}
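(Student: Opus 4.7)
The plan is to linearize $\widehat\theta_T$ and $\widehat\theta_S$ around $\theta$ via Proposition~\ref{prop_loi_optimale}, use condition~2 of Assumption~\ref{hypo_prop_thetaST_limit_LAN} to obtain convergence in probability of $\sqrt{T}(\widehat\theta_T-\widehat\theta_S)$ to zero, and finally upgrade to convergence in $L^2$ using uniform integrability, which condition~5 is precisely designed to deliver.

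First, conditions~1, 3 and 4 place us in the scope of Theorem~\ref{theo_convolution} applied to $\psi(\theta)=\theta$ (so $\J_\theta\psi(\theta)=I_d$), with $\widehat\theta_T$ regular and attaining the optimal Gaussian limit $\mathcal{N}(0,I(\theta)^{-1})$. Proposition~\ref{prop_loi_optimale} then gives
\[
\sqrt{T}(\widehat\theta_T-\theta)=I(\theta)^{-1}\Delta_T(\theta)+o_{\P_\theta^T}(1),
\]
and applying the same proposition at the index $S=S(T)\to\infty$ (noting that $\widehat\theta_S$ is $X_{(S)}$-measurable, so has identical laws under $\P_\theta^T$ and $\P_\theta^S$) yields
\[
\sqrt{S}(\widehat\theta_S-\theta)=I(\theta)^{-1}\Delta_S(\theta)+o_{\P_\theta^T}(1).
\]
Since $S\sim T$ implies $\sqrt{T/S}\to 1$, subtraction gives
\[
\sqrt{T}(\widehat\theta_T-\widehat\theta_S)
= I(\theta)^{-1}\bigl[\Delta_T(\theta)-\sqrt{T/S}\,\Delta_S(\theta)\bigr]+o_{\P_\theta^T}(1),
\]
and condition~2 of Assumption~\ref{hypo_prop_thetaST_limit_LAN} makes the bracketed term vanish in $\P_\theta^T$-probability, so $\sqrt{T}(\widehat\theta_T-\widehat\theta_S)\to 0$ in probability.

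To upgrade convergence in probability to convergence in $L^2$, I would establish that $\bigl\{\|\sqrt{T}(\widehat\theta_T-\widehat\theta_S)\|_{\R^d}^2\bigr\}_T$ is uniformly integrable. By conditions~4 and 5, $\sqrt{T}(\widehat\theta_T-\theta)\Rightarrow\eta\sim\mathcal{N}(0,I(\theta)^{-1})$ and $\E_\theta\|\sqrt{T}(\widehat\theta_T-\theta)\|^2\to\trace(I(\theta)^{-1})=\E\|\eta\|^2$; matching weak convergence with convergence of second moments yields (via a standard Vitali-type argument) uniform integrability of $\{\|\sqrt{T}(\widehat\theta_T-\theta)\|^2\}_T$, and the same reasoning reindexed at $S$ gives uniform integrability of $\{\|\sqrt{S}(\widehat\theta_S-\theta)\|^2\}_T$. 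The elementary bound
\[
\|\sqrt{T}(\widehat\theta_T-\widehat\theta_S)\|^2
\leqslant 2\|\sqrt{T}(\widehat\theta_T-\theta)\|^2+2(T/S)\|\sqrt{S}(\widehat\theta_S-\theta)\|^2,
\]
combined with $T/S\to 1$, dominates the target sequence by a uniformly integrable one. Together with convergence in probability, this yields $\lim_{T\to\infty}T\,\E_\theta\|\widehat\theta_T-\widehat\theta_S\|_{\R^d}^2=0$.

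The main obstacle is the uniform-integrability step: convergence in probability of $\sqrt{T}(\widehat\theta_T-\widehat\theta_S)$ to zero does not by itself yield $L^2$ convergence, and the role of condition~5 is precisely to provide the matching second-moment bound which, combined with the weak convergence of condition~4, forces uniform integrability. Everything else reduces to a direct application of the H\'ajek--Le Cam convolution theorem and its linearization corollary.
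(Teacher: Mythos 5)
Your argument is correct, but it parts company with the paper's proof at the decisive step. Both proofs begin identically: conditions~1, 3 and 4 of Assumption~\ref{hypo_prop_thetaST_limit_LAN} let you apply Proposition~\ref{prop_loi_optimale} with $\psi=\mathrm{id}$ to get $\sqrt{T}(\widehat{\theta}_T-\theta)=I(\theta)^{-1}\Delta_T+o_{\P_\theta^T}(1)$ and likewise at $S$, and condition~2 controls $\Delta_T-\sqrt{T/S}\,\Delta_S$. From there you show $\sqrt{T}(\widehat{\theta}_T-\widehat{\theta}_S)\to 0$ in $\P_\theta^T$-probability and upgrade to $\L^2$ via uniform integrability, which you extract from the matching of the weak limit $\mathcal{N}(0,I(\theta)^{-1})$ (condition~4) with the limit of second moments (condition~5); this is the standard fact that weak convergence of nonnegative variables together with convergence of their means forces uniform integrability, and the domination by $2\|\sqrt{T}(\widehat{\theta}_T-\theta)\|^2+2(T/S)\|\sqrt{S}(\widehat{\theta}_S-\theta)\|^2$ closes the argument. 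The paper never proves convergence in probability of the difference and never invokes uniform integrability: it introduces the midpoint estimator $\widetilde{\theta}_T=\tfrac{1}{2}(\widehat{\theta}_T+\widehat{\theta}_S)$ and uses the parallelogram identity in $\L^2(\P_\theta)$,
\[
4T\E_\theta\|\widetilde{\theta}_T-\theta\|_{\R^d}^2+T\E_\theta\|\widehat{\theta}_T-\widehat{\theta}_S\|_{\R^d}^2
=2T\E_\theta\|\widehat{\theta}_T-\theta\|_{\R^d}^2+2T\E_\theta\|\widehat{\theta}_S-\theta\|_{\R^d}^2,
\]
whose right-hand side tends to $4\,\trace(I(\theta)^{-1})$ by condition~5; the linearization, condition~2 and Slutsky give $\sqrt{T}(\widetilde{\theta}_T-\theta)\Rightarrow\mathcal{N}(0,I(\theta)^{-1})$, whence $\varliminf_{T} T\E_\theta\|\widetilde{\theta}_T-\theta\|_{\R^d}^2\geqslant\trace(I(\theta)^{-1})$ by Lemma~\ref{lemme_variance_asympt_ineg}, and the middle term is squeezed to $0$. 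The paper's route buys economy: it reuses the one-sided moment inequality already proved as Lemma~\ref{lemme_variance_asympt_ineg} and needs no uniform-integrability lemma. Your route buys a slightly stronger intermediate conclusion (the explicit in-probability rate $\sqrt{T}(\widehat{\theta}_T-\widehat{\theta}_S)\to 0$) at the cost of supplying the Vitali-type argument, which the paper nowhere states; if you keep your version, that lemma should be stated and proved or precisely cited.
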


\begin{proof}
Let $\widetilde{\theta}_T = \frac{1}{2}(\widehat{\theta}_T + \widehat{\theta}_S)$. For all vectors $x$ and $y$, the parallelogram identity is
$$
\|x+y\|^2 + \|x-y\|^2 = 2 \|x\|^2 + 2 \|y\|^2.
$$
Taking $x=\widehat{\theta}_T-\theta$ and $y=\widehat{\theta}_S-\theta$ in the normed vector space $\L^2(\P_\theta)$ we get
$$
\E_\theta \| \widehat{\theta}_T+\widehat{\theta}_S-2\theta \|_{\R^d}^2 + \E_\theta \| \widehat{\theta}_T-\widehat{\theta}_S \|_{\R^d}^2 = 2 \E_\theta \|\widehat{\theta}_T-\theta \|_{\R^d}^2 + 2\E_\theta \|\widehat{\theta}_S-\theta \|_{\R^d}^2.
$$
Hence
\begin{equation*}
4T\E_\theta \| \widetilde{\theta}_T-\theta \|_{\R^d}^2 + T\E_\theta \|\widehat{\theta}_T-\widehat{\theta}_S \|_{\R^d}^2 = 2 T\E_\theta \|\widehat{\theta}_T-\theta \|_{\R^d}^2 + 2 T\E_\theta \|\widehat{\theta}_S-\theta \|_{\R^d}^2.
\end{equation*}
To complete the proof it remains to prove that
\begin{equation*}
\varliminf_{T\to\infty} T\E_\theta \| \widetilde{\theta}_T-\theta \|_{\R^d}^2
\geqslant \trace(I(\theta)^{-1}).
\end{equation*}
From Proposition~\ref{prop_loi_optimale}, the normalized error of estimators $\widehat{\theta}_T$ and $\widehat{\theta}_S$ have the following asymptotic expansions.
\[
\sqrt{T}(\widehat{\theta}_T - \theta) = I(\theta)^{-1} \Delta_T + o_{\P_\theta^T}(1),
\]
\[
\sqrt{S}(\widehat{\theta}_S - \theta) = I(\theta)^{-1} \Delta_S + o_{\P_\theta^S}(1).
\]
Where we omit the argument $\theta$ of $\Delta_T$ and $\Delta_S$ to make notations lighter. We deduce,
\begin{align*}
\sqrt{T}(\widetilde{\theta}_T - \theta)
& = \frac{1}{2} \sqrt{T}(\widehat{\theta}_T - \theta)
+ \frac{1}{2} \frac{\sqrt{T}}{\sqrt{S}}\sqrt{S}(\widehat{\theta}_S - \theta)\\
& = \frac{1}{2} I(\theta)^{-1} \left( \Delta_T + \textstyle{ \frac{\sqrt{T}}{\sqrt{S}} } \Delta_S \right) + o_{\P_\theta^T}(1)\\
& =  I(\theta)^{-1} \left( \Delta_T + \frac{1}{2} \left( \textstyle{ \frac{\sqrt{T}}{\sqrt{S}} } \Delta_S - \Delta_T \right) \right) + o_{\P_\theta^T}(1).
\end{align*}
Yet $\P_\theta^T \circ \Delta_T \Rightarrow \mathcal{N}(0,I(\theta))$ and $\frac{\sqrt{T}}{\sqrt{S}} \Delta_S - \Delta_T$ converges toward $0$ in probability. Hence, applying Slutzki's lemma we get,
\[
\P_\theta^T \circ \sqrt{T}(\widetilde{\theta}_T - \theta)
\underset{T\to\infty}{\Longrightarrow} \mathcal{N}(0,I(\theta)^{-1}).
\]
Therefore, from Lemma~\ref{lemme_variance_asympt_ineg},
\[
\varliminf_{T\to\infty} T \E_\theta \| \widetilde{\theta}_T - \theta \|_{\R^d}^2 \geqslant \trace(I(\theta)^{-1})).
\]
This completes the proof.
\end{proof}

\subsubsection{Bivariate Ornstein-Uhlenbeck process}\label{section_OU_risk_eq}

We have seen in Remark~\ref{rem_hypo_rho_T_S} that Assumption~\ref{hypo_rho_T_S} is fulfilled for our problem of forecasting of the bivariate stationary Ornstein-Uhlenbeck process.
We are now going to see that Assumptions~\ref{hypo_beta_tilde} and \ref{hypo_theta_T_theta_S}, and conditions of Theorem~\ref{prop_risk_eq_bis} are fulfilled too.

We begin with Assumption~\ref{hypo_beta_tilde}.
We take $S = T - \sqrt{T}$, and $\lambda = \ell \otimes \nu$ with $\ell$ Lebesgue's measure over $\R$ and $\nu$ the distribution of the process $(U + W_t,\; t\geqslant 0)$, where $(W_t, t\geqslant 0)$ is a standard bivariate Wiener process and $U \sim \mathcal{N}(0,I_2)$ independent (with $I_2$ the unit matrix of $\M_2$). It holds
\[
\|\Delta r (x,\xi)\|_{\R^2} = \|(e^{-Q(\bar{\theta}_T(\xi))h} - e^{-Q(\theta) h})x\|_{\R^2} \leqslant 2 \| x \|_{\R^2}, \quad \forall x\in \R^2.
\]
We deduce that the first condition of Assumption~\ref{hypo_beta_tilde} is fulfilled,
\[
\|\Delta r\|_{\L^m(\Delta f \lambda)}^m \leqslant \int \|\Delta r\|_m^m \Delta f d\lambda
\leqslant 4 \E_\theta \|X_0\|_m^m < \infty,
\]
where $\|\cdot\|_m$ is the $m$ norm of $\R^2$.
In particular for $m=4$, this bound is
\[
4 \E_\theta \|X_0\|_4^4 = 12 ((\E_\theta X_{0,1}^2)^2 + (\E_\theta X_{0,2}^2)^2) = \frac{12 \alpha^2}{(\alpha^2-\beta^2)^2}.
\]

The multivariate Ornstein-Uhlenbeck process is geometrically $\beta$-mixing (see \cite{veretennikov1987}), which allows to conclude that the second condition of Assumption~\ref{hypo_beta_tilde} is fulfilled for all $m>2$.

We are now going to see that Assumption~\ref{hypo_prop_thetaST_limit_LAN} is verified, which will imply Assumption~\ref{hypo_theta_T_theta_S}. We have already seen that conditions~1, 3, 4 et 5 are fulfilled, we turn to condition~2.

The family $\left(\P_\theta^T,\theta\in\Theta\right)$, $T>0$, is LAN (see \cite{kutoyants2004} p.113) with,
\[
\Delta_T(\theta) = T^{-1/2} \int_0^T \left( \J_\theta S(\theta,X_t) \right)' dW_t,
\]
where $S(\theta,X_t) = -Q(\theta)X_t$. Therefore $\Delta_T(\theta) = T^{-1/2} \int_0^T Q(X_t) dW_t$. Now
\begin{align*}
\Delta_T - \frac{\sqrt{T}}{\sqrt{S}} \Delta_S
& = - T^{-1/2} \int_0^T Q(X_t) dW_t + T^{1/2}S^{-1} \int_0^S Q(X_t) dW_t\\
& = T^{-1/2} \left( \frac{T}{S} - 1 \right) \int_0^S Q(X_t) dW_t + T^{-1/2} \int_S^T Q(X_t) dW_t\\
& = A_t + B_t.
\end{align*}
\begin{align*}
\E_\theta \| A_t \|_{\R^2}^2 & = T^{-1} \left( \frac{T}{S} - 1 \right)^2 \int_0^S \E_\theta Q(X_t)^2 dt\\
& = \frac{S}{T} \left( \frac{T}{S} - 1 \right)^2 Q(\theta)^{-1} \xrightarrow[T\to\infty]{} 0.
\end{align*}
The convergence of $A_t$ to $0$ in $\L^2$ implies its convergence in probability.
\begin{align*}
\E_\theta \| B_t \|_{\R^2}^2 & = T^{-1} \int_S^T \E_\theta Q(X_t)^2 dt = \frac{T-S}{T} Q(\theta)^{-1} \\
&  = \left( 1 - \frac{S}{T} \right) Q(\theta)^{-1} \xrightarrow[T\to\infty]{} 0.
\end{align*}
Hence $B_t$ converges to $0$ in probability. We deduce
\[
\left\| \Delta_T - \textstyle{\frac{\sqrt{T}}{\sqrt{S}}} \Delta_S \right\|_{\R^2} \leqslant \| A_t \|_{\R^2} + \| B_t \|_{\R^2} \xrightarrow[T\to\infty]{\P_\theta} 0.
\]
Therefore condition~2 of Assumption~\ref{hypo_prop_thetaST_limit_LAN} is fulfilled.

Condition~1 of Theorem~\ref{prop_risk_eq_bis} is fulfilled from Theorem~2.8 p.121 of \cite{kutoyants2004}. We are now going to see that condition~2 is fulfilled too.
\begin{align*}
\| r(x,\theta) - r(x,\theta^*) \|_{\R^2}
& = \| \J_\theta r(x,\tilde{\theta}) (\theta - \theta^*) \|_{\R^2}\\
& \leqslant \| \J_\theta r(x,\tilde{\theta}) \|_{\M_2} \| \theta - \theta^* \|_{\R^2}\\
& \leqslant h \| e^{-h Q(\tilde{\theta})} \|_{\M_2} \|M(x)\|_{\M_2} \| \theta - \theta^* \|_{\R^2}\\
& \leqslant \frac{h}{2} \| P \|_{\M_2}^2 \| e^{-h D(\tilde{\theta})} \|_{\M_2} \|M(x)\|_{\M_2} \| \theta - \theta^* \|_{\R^2}\\
& \leqslant \sqrt{2} h \| P \|_{\M_2}^2 \| e^{-h D(\tilde{\theta})} \|_{\M_2} \|x \|_{\R^2} \| \theta - \theta^* \|_{\R^2}\\
& \leqslant \ell(x) \| \theta - \theta^* \|_{\R^2},
\end{align*}
with $\tilde{\theta} = \lambda \theta + (1-\lambda) \theta^*$, $\lambda \in [0,1]$, and $\ell(x) = \sqrt{2} h \| P \|_{\M_2}^2 \| e^{-h D(\tilde{\theta})} \|_{\M_2} \|x \|_{\R^2}$. For all $T\geqslant 0$, $X_T \sim \mathcal{N} \left( 0,\frac{Q(\theta)^{-1}}{2} \right)$ hence $\ell$ verifies condition~3 of Theorem~\ref{prop_risk_eq_bis}, for all $\nu>0$.

In conclusion, Assumption~\ref{hypo_EQR_EQP_equiv_LAN} is verified. The results pertaining to prediction of the previous section applies to the problem of forecasting of the bivariate stationary Ornstein-Uhlenbeck process that we consider here. The plug-in predictor $r(X_T,\widehat{\theta}_T)$ is asymptotically efficient.

\subsection{Break down of the limiting QER}\label{section_decomp_QER}

In this section we see results which give an expression of the limiting QER under some conditions. As a by-product we obtain the fulfillment of Assumption~\ref{hypo_rho_T_S}. We shall use the following notation, for all function $f \in \L^2(\mu_\theta)$,
\[
\| f \|_{\mu_{\theta}}^2 = \int_E \|f(x)\|^2 d\mu_\theta(x).
\]

In the univariate case, the limiting QER may take the following product form,
\[
\lim_{T\to\infty} T \rho_T(\theta) = \|\partial_\theta r(\cdot,\theta)\|_{\mu_{\theta}}^2 V(\theta),
\]
with $V(\theta) = \lim_{T\to\infty} T\E_\theta(\widehat{\theta}_T-\theta)^2$. 
The result generalizes to the multivariate case with the following forms,
\begin{align*}
\lim_{T\to\infty} T \rho_T(\theta) 
&= \lim_{T\to\infty} T \int_E \big(\J_\theta r(x ,\theta)\big) \E_\theta(\widehat{\theta}_T-\theta)^{\times 2} \big(\J_\theta r(x ,\theta)\big)' d\mu_{\theta}(x),\\
\lim_{T\to\infty} T \rho_T(\theta) 
&= \int_E \big(\J_\theta r(x ,\theta)\big) V(\theta) \big(\J_\theta r(x ,\theta)\big)' d\mu_{\theta}(x),
\end{align*}
with $V(\theta) = \lim_{T\to\infty} T\E_\theta(\widehat{\theta}_T-\theta)^{\times 2}$. 
As a corollary we will deduce that Assumption~\ref{hypo_rho_T_S} is verified, namely
\[
\lim_{T\to\infty} T \rho_T(\theta) = \lim_{T\to\infty} T \rho_T^S(\theta).
\]

\subsubsection{Univariate case}

\begin{hypo}\label{hypo_EQR_limit}{\ }
\begin{enumerate}
\item\label{condition_c_T} $\exists \alpha\in(0,1]$, $\exists c(x)$ such that $\|c\|_{\mu_{\theta}} < \infty$ and
\[
|\partial_\theta r(x,\theta') - \partial_\theta r(x,\theta)| \leqslant c(x)|\theta'-\theta|^\alpha,
\; \forall x\in E,\;
\forall \theta,\,\theta'\in\Theta,
\; \forall T>0.
\]
\end{enumerate}
Moreover, $\widehat{\theta}_T$ is an estimator of $\theta$ such that
\begin{enumerate}\setcounter{enumi}{2}
\item $\exists V(\theta)$ such that $\lim_{T\to\infty} T\E_\theta(\widehat{\theta}_T-\theta)^2 = V(\theta)$,
\item\label{condition_estim_conv_rate} $T\E_\theta|\widehat{\theta}_T - \theta|^{2+2\alpha} = o(1)$.
\end{enumerate}
\end{hypo}

\begin{proposition}\label{EQR_limit}
Under Assumption~\ref{hypo_EQR_limit}, for all $\theta\in\Theta$,
\[
\lim_{T\to\infty} T\rho_T(\theta) = \|\partial_\theta r(\cdot,\theta)\|_{\mu_{\theta}}^2 V(\theta).
\]
\end{proposition}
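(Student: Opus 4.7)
The plan is to expand $r(x,\widehat{\theta}_T)-r(x,\theta)$ via the integral form of the mean value theorem, isolate the first-order contribution, and use the H\"older-type condition on $\partial_\theta r$ to control the remainder. Specifically, I will write
\[
r(x,\widehat{\theta}_T) - r(x,\theta) = (\widehat{\theta}_T-\theta)\,\partial_\theta r(x,\theta) + R_T(x),
\]
where $R_T(x) = (\widehat{\theta}_T-\theta)\int_0^1 [\partial_\theta r(x,\theta+t(\widehat{\theta}_T-\theta)) - \partial_\theta r(x,\theta)]\,dt$. Condition~\ref{condition_c_T} of Assumption~\ref{hypo_EQR_limit} yields immediately the pointwise bound
\[
|R_T(x)| \leq \frac{c(x)}{1+\alpha}\,|\widehat{\theta}_T-\theta|^{1+\alpha}.
\]

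Squaring the decomposition and integrating against $\mu_\theta$ by Fubini (nonnegative integrand), $T\rho_T(\theta)$ splits into three pieces. The leading piece factors cleanly as
\[
T\,\E_\theta(\widehat{\theta}_T-\theta)^2 \cdot \int_E (\partial_\theta r(x,\theta))^2\, d\mu_\theta(x),
\]
which converges to $V(\theta)\|\partial_\theta r(\cdot,\theta)\|_{\mu_\theta}^2$ by condition 3 of Assumption~\ref{hypo_EQR_limit} and the standing $\L^2(\mu_\theta)$-integrability of $\partial_\theta r(\cdot,\theta)$. The remainder-squared term is dominated by $\tfrac{\|c\|_{\mu_\theta}^2}{(1+\alpha)^2}\, T\,\E_\theta|\widehat{\theta}_T-\theta|^{2+2\alpha}$, which is $o(1)$ directly from condition~\ref{condition_estim_conv_rate}. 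For the cross term, one application of Cauchy--Schwarz against $\mu_\theta$ gives $\int_E c(x)|\partial_\theta r(x,\theta)|\,d\mu_\theta \leq \|c\|_{\mu_\theta}\|\partial_\theta r(\cdot,\theta)\|_{\mu_\theta} < \infty$, so matters reduce to showing $T\,\E_\theta|\widehat{\theta}_T-\theta|^{2+\alpha}\to 0$.

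This final step is the main obstacle, because the naive bound $\E|Z|^{2+\alpha}\leq (\E|Z|^{2+2\alpha})^{(2+\alpha)/(2+2\alpha)}$ only yields $T\,\E|Z|^{2+\alpha}=o(T^{\alpha/(2+2\alpha)})$, which diverges. The resolution is the interpolation $|Z|^{2+\alpha} = |Z|\cdot|Z|^{1+\alpha}$ and a second Cauchy--Schwarz on the probability space:
\[
T\,\E_\theta|\widehat{\theta}_T-\theta|^{2+\alpha} \leq \sqrt{T\,\E_\theta(\widehat{\theta}_T-\theta)^2}\,\sqrt{T\,\E_\theta|\widehat{\theta}_T-\theta|^{2+2\alpha}} \longrightarrow \sqrt{V(\theta)}\cdot 0 = 0,
\]
which trades the $O(T^{-1})$ rate of the second moment against the $o(T^{-1})$ rate of the $(2+2\alpha)$-th moment. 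Once both remainder terms are shown to be $o(1)$, the proposition follows by summing the three contributions.
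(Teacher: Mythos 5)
Your proposal is correct and follows essentially the same route as the paper: the same decomposition into a leading term, a squared remainder, and a cross term, with the same key interpolation $|\widehat{\theta}_T-\theta|^{2+\alpha}=|\widehat{\theta}_T-\theta|\cdot|\widehat{\theta}_T-\theta|^{1+\alpha}$ followed by Cauchy--Schwarz to handle the cross term. The only cosmetic difference is that you use the integral form of the mean value theorem (gaining an irrelevant factor $1/(1+\alpha)$ in the remainder bound) where the paper uses the Lagrange form with an intermediate point $\widetilde{\theta}_T$.
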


\begin{proof}
\[
r(x,\widehat{\theta}_T)-r(x,\theta) = (\widehat{\theta}_T-\theta)\partial_\theta r(x,\widetilde{\theta}_T)
\]
where $\widetilde{\theta}_T \in \big[ \min(\theta,\widehat{\theta}_T), \,\max(\theta,\widehat{\theta}_T) \big]$. Soit
\[
\delta_T(x) = (\widehat{\theta}_T- \theta)(\partial_\theta r(x,\widetilde{\theta}_T) - \partial_\theta r(x,\theta)).
\]
Hence
\begin{equation*}
r(x,\widehat{\theta}_T)-r(x,\theta) = (\widehat{\theta}_T-\theta)\partial_\theta r(x,\theta) + \delta_T(x),
\end{equation*}
and using condition~\ref{condition_c_T} of Assumption~\ref{hypo_EQR_limit}
\begin{equation}\label{delta_maj}
|\delta_T(x)| \leqslant c(x)|\widehat{\theta}_T-\theta|^{1+\alpha}.
\end{equation}
Besides,
\begin{align*}
\rho_T(\theta)
& = \E_\theta(\widehat{\theta}_T-\theta)^2\int_E (\partial_\theta r(x,\theta))^2d\mu_{\theta}(x)
+\int_E\E_\theta(\delta_T^2(x))d\mu_{\theta}(x)\\
& \quad\; + 2\int_E \E_\theta\big[(\widehat{\theta}_T - \theta)\delta_T(x)\big]\partial_\theta r(x,\theta) d\mu_{\theta}(x)\\
& = J_1 + J_2 + J_3.
\end{align*}
From condition (\ref{delta_maj}) it ensues
\[
|J_2| \leqslant \E_\theta(|\widehat{\theta}_T-\theta|^{2+2\alpha})\int_E c^2_T d\mu_{\theta}.
\]
Since $\|c\|_{\mu_{\theta}}$ is bounded, condition~\ref{condition_estim_conv_rate} of Assumption~\ref{hypo_EQR_limit} implies $|J_2|=o(\frac{1}{T})$. Now (\ref{delta_maj}) gives
\begin{align*}
|J_3|
& \leqslant 2 \E_\theta(|\widehat{\theta}_T-\theta|^{2+\alpha})\int_E c(x) \, | \partial_\theta r(x,\theta) | \, d\mu_{\theta}(x)\\
& \leqslant 2 \E_\theta|\widehat{\theta}_T-\theta|^{2+\alpha}\|c\|_{\mu_{\theta}} \|\partial_\theta r(\cdot,\theta)\|_{\mu_{\theta}}\\
& \leqslant 2 \E_\theta\left[ |\widehat{\theta}_T-\theta||\widehat{\theta}_T-\theta|^{1+\alpha} \right] \|c\|_{\mu_{\theta}} \|\partial_\theta r(\cdot,\theta)\|_{\mu_{\theta}}\\
& \leqslant 2 \left(\E_\theta (\widehat{\theta}_T-\theta)^2\right)^{\frac{1}{2}} \left(\E_\theta |\widehat{\theta}_T-\theta|^{2+2\alpha}\right)^{\frac{1}{2}} \|c\|_{\mu_{\theta}} \|\partial_\theta r(\cdot,\theta)\|_{\mu_{\theta}}\\
& \leqslant \bigo\left(\frac{1}{\sqrt{T}}\right) o\left(\frac{1}{\sqrt{T}}\right)
= o\left(\frac{1}{T}\right).
\end{align*}
Finally $T J_1 \longrightarrow \|\partial_\theta r(\cdot,\theta)\|_{\mu_{\theta}}^2 V(\theta)$.
\end{proof}

We deduce that under conditions of Proposition~\ref{EQR_limit}, Assumption~\ref{hypo_rho_T_S} is verified.

\begin{corollaire}
under Assumption~\ref{hypo_EQR_limit}, for all $\theta\in\Theta$,
\[
\lim_{T\to\infty} T \rho_T(\theta) = \lim_{T\to\infty} T \rho_T^S(\theta) = \|\partial_\theta r(\cdot,\theta)\|_{\mu_{\theta}}^2 V(\theta).
\]
\end{corollaire}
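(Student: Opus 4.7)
My plan is to derive the corollary directly from Proposition~\ref{EQR_limit}, applied at two different time indices: once at $T$ and once at $S(T)$. The pivotal observation is that, because the measure $\mu_\theta$ is assumed in Section~\ref{section_modele} not to depend on $T$, the auxiliary risk $\rho_T^S(\theta)$ coincides with the ordinary plug-in QER evaluated at the earlier time $S$:
\[
\rho_T^S(\theta) = \int_E \E_\theta\bigl(r(x,\widehat{\theta}_S) - r(x,\theta)\bigr)^2 d\mu_\theta(x) = \rho_S(\theta).
\]
Thus the $T$-dependence of $\rho_T^S$ enters only through $S=S(T)$, and no separate analysis of the auxiliary risk is needed: the one-sample result Proposition~\ref{EQR_limit} controls both limits at once.

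First I would apply Proposition~\ref{EQR_limit} directly at time $T$ to obtain the first equality
\[
\lim_{T\to\infty} T \rho_T(\theta) = \|\partial_\theta r(\cdot,\theta)\|_{\mu_\theta}^2 V(\theta).
\]
Next, since $S(T) \leqslant T$ and $S(T)\sim T$ imply $S(T)\to\infty$, the asymptotic conditions listed in Assumption~\ref{hypo_EQR_limit} on the family $(\widehat{\theta}_T)_{T\geqslant 0}$ pass to the subsequence $\widehat{\theta}_{S(T)}$: in particular $S(T)\E_\theta(\widehat{\theta}_{S(T)}-\theta)^2 \to V(\theta)$ and $S(T)\E_\theta|\widehat{\theta}_{S(T)}-\theta|^{2+2\alpha} = o(1)$. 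Hence Proposition~\ref{EQR_limit} applies along $S(T)$ and gives
\[
\lim_{T\to\infty} S(T)\rho_S(\theta) = \|\partial_\theta r(\cdot,\theta)\|_{\mu_\theta}^2 V(\theta).
\]

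To close the argument I would write $T\rho_T^S(\theta) = (T/S(T)) \cdot S(T)\rho_S(\theta)$ and use $T/S(T) \to 1$ together with the previous display to conclude that $T\rho_T^S(\theta)$ has the same limit as $T\rho_T(\theta)$. I do not foresee any real obstacle; the whole proof is essentially a bookkeeping exercise. The one point worth a brief check is that the hypotheses of Proposition~\ref{EQR_limit} really do transfer from the net indexed by $T$ to the subnet indexed by $S(T)$, but this is immediate because those hypotheses are statements about the family $(\widehat{\theta}_T)$ as the index tends to infinity and $S(T)\to\infty$.
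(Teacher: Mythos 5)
Your proposal is correct and follows essentially the same route as the paper: the paper's proof consists precisely of observing that $\widehat{\theta}_{S}$ inherits Assumption~\ref{hypo_EQR_limit} because $S(T)\to\infty$ and $S\sim T$, and then reapplying Proposition~\ref{EQR_limit}. Your version merely makes explicit the identification $\rho_T^S(\theta)=\rho_{S(T)}(\theta)$ and the final factor $T/S(T)\to 1$, which the paper leaves implicit.
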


\begin{proof}
If $\widehat{\theta}_T$ satisfies Assumption~\ref{hypo_EQR_limit} then $\widehat{\theta}_S$ also satisfies this assumption because $S \sim T$. Hence 
$
\lim_{T\to\infty} T \rho_T^S(\theta) = \|\partial_\theta r(\cdot,\theta)\|_{\mu_{\theta}}^2 V(\theta).
$
\end{proof}

\begin{remarque}
When Proposition~\ref{EQR_limit} applies, the asymptotic efficiency of a plug-in estimator $r(\cdot,\widehat{\theta}_T)$, or a plug-in predictor $r(X_T,\widehat{\theta}_T)$, comes down to the asymptotic efficiency of the estimator $\widehat{\theta}_T$.
More precisely, if Assumption~\ref{hypo_EQR_limit} holds and if $V(\theta) \neq 0$, then $r(\cdot,\widehat{\theta}_T)$ is asymptotically efficient iff
\[
\lim_{T\to\infty} T \E_\theta (\widehat{\theta}_T - \theta)^2 = I(\theta)^{-1},
\]
assuming the family is LAN and $I(\theta)$ is the asymptotic Fisher information.

If in addition Assumptions~\ref{hypo_beta_tilde} and \ref{hypo_theta_T_theta_S} are satisfied, then
\[
\lim_{T\to\infty} T R_T(\theta) = \|\partial_\theta r(\cdot,\theta)\|_{\mu_{\theta}}^2 V(\theta),
\]
and a plug-in predictor $r(X_T,\widehat{\theta}_T)$ is asymptotically efficient iff
\[
\lim_{T\to\infty} T \E_\theta (\widehat{\theta}_T - \theta)^2 = I(\theta)^{-1}.
\]
\end{remarque}

\subsubsection{Multivariate case}

We now see generalizations of these results to the multivariate case.

\begin{hypo}\label{hypo_EQR_limit_multi_d}{\ }
\begin{enumerate}
\item\label{condition_c_T_multi_d}  $\exists\alpha\in(0,1]$, $\exists c:\R^k\to\R$ with $\|c\|_{\mu_{\theta}} < \infty$ such that
\[
\| \J_\theta r(x,\theta ') - \J_\theta r(x,\theta) \|_{\M_{k,d}} \leqslant c(x)\| \theta '-\theta \|_{\R^d}^\alpha,
\quad
x\in E,\;\theta,\,\theta ' \in\Theta.
\]
\end{enumerate}
Moreover, $\widehat{\theta}_T$ is an estimator of $\theta$ such that
\begin{enumerate}\setcounter{enumi}{1}
\item $T\E_\theta \|\widehat{\theta}_T-\theta\|_{\R^d}^2 = \bigo(1)$.
\item\label{condition_estim_conv_rate_multi_d} $T\E_\theta\|\widehat{\theta}_T-\theta\|_{\R^d}^{2+2\alpha} = o(1)$.
\item\label{condition_eqr_limit_multi_d} The following limit exists,
\[
R(\theta) =
\lim_{T\to\infty} T \int_E \big(\J_\theta r(x ,\theta)\big) \E_\theta(\widehat{\theta}_T-\theta)^{\times 2} \big(\J_\theta r(x ,\theta)\big)' d\mu_{\theta}(x).
\]
\end{enumerate}
\end{hypo}

\begin{proposition}\label{EQR_limit_multi_d}
Under Assumption~\ref{hypo_EQR_limit_multi_d}, for all $\theta\in\Theta$,
\[
\lim_{T\to\infty} T\rho_T(\theta) = R(\theta).
\]
\end{proposition}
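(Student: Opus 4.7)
The plan is to reproduce the scalar argument of Proposition~\ref{EQR_limit} in the vector-matrix setting, using the integral form of Taylor's expansion in place of the one-dimensional mean-value theorem.

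First, for fixed $x \in E$, I would write
\[
r(x,\widehat{\theta}_T) - r(x,\theta) = (\J_\theta r(x,\theta))(\widehat{\theta}_T - \theta) + \delta_T(x),
\]
where
\[
\delta_T(x) = \int_0^1 \big[\J_\theta r(x, \theta + s(\widehat{\theta}_T - \theta)) - \J_\theta r(x,\theta)\big] (\widehat{\theta}_T - \theta)\, ds.
\]
Condition~\ref{condition_c_T_multi_d} of Assumption~\ref{hypo_EQR_limit_multi_d}, together with submultiplicativity of the Frobenius norm, should then yield the pointwise bound $\|\delta_T(x)\|_{\R^k} \leqslant \frac{c(x)}{1+\alpha}\,\|\widehat{\theta}_T - \theta\|_{\R^d}^{1+\alpha}$, which plays the role of (\ref{delta_maj}) in the univariate proof.

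Next I would expand the outer product $(r(x,\widehat{\theta}_T) - r(x,\theta))^{\times 2}$, integrate against $\mu_\theta$ and take the expectation under $\P_\theta$, which decomposes the risk as $\rho_T(\theta) = J_1 + J_2 + J_3 + J_4$, with
\[
J_1 = \int_E (\J_\theta r(x,\theta))\,\E_\theta(\widehat{\theta}_T - \theta)^{\times 2}\,(\J_\theta r(x,\theta))'\, d\mu_\theta(x),
\]
$J_2, J_3$ the two symmetric cross terms in $\delta_T$ and $(\J_\theta r(x,\theta))(\widehat{\theta}_T - \theta)$, and $J_4 = \int_E \E_\theta[\delta_T(x)^{\times 2}]\,d\mu_\theta(x)$. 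Condition~\ref{condition_eqr_limit_multi_d} of Assumption~\ref{hypo_EQR_limit_multi_d} gives $TJ_1 \to R(\theta)$ directly, so the remaining task is to prove $TJ_2, TJ_3, TJ_4 \to 0$ in $\|\cdot\|_{\M_k}$.

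For $J_4$, the pointwise bound on $\delta_T$ combined with $\|c\|_{\mu_\theta}<\infty$ yields $\|TJ_4\|_{\M_k} \leqslant (1+\alpha)^{-2}\,\|c\|_{\mu_\theta}^2\, T\,\E_\theta\|\widehat{\theta}_T - \theta\|^{2+2\alpha}$, which is $o(1)$ by condition~\ref{condition_estim_conv_rate_multi_d}. For $J_2$ (and $J_3$ by symmetry), I would use $\|uv'\|_{\M_k} = \|u\|\,\|v\|$ for vectors, submultiplicativity of the Frobenius norm, and Cauchy--Schwarz twice---once under $\E_\theta$, once against $\mu_\theta$ (valid since $\J_\theta r(\cdot,\theta) \in \L^2(\mu_\theta)$ by the standing implicit assumption). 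This should reduce the bound to a product of $(T\E_\theta\|\widehat{\theta}_T - \theta\|^2)^{1/2}=\bigo(1)$ and $(T\E_\theta\|\widehat{\theta}_T - \theta\|^{2+2\alpha})^{1/2}=o(1)$, multiplied by $\|\J_\theta r(\cdot,\theta)\|_{\mu_\theta}\,\|c\|_{\mu_\theta}$, hence $o(1)$. The main obstacle I anticipate is the matrix bookkeeping in $J_2, J_3$: chaining the Frobenius-norm inequalities with the two Cauchy--Schwarz steps in the correct order so as to land on exactly the moments controlled by conditions~2 and \ref{condition_estim_conv_rate_multi_d}. Beyond that, the argument is a direct vector-matrix lift of the scalar proof.
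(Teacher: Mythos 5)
Your proposal is correct and follows essentially the same route as the paper: the same first-order expansion with remainder $\delta_T$, the same bound $\|\delta_T(x)\|\leqslant \mathrm{const}\cdot c(x)\|\widehat{\theta}_T-\theta\|^{1+\alpha}$, the same decomposition of $\rho_T(\theta)$ into a main term plus cross and quadratic remainder terms, and the same Cauchy--Schwarz treatment of the cross terms against the moments $T\E_\theta\|\widehat{\theta}_T-\theta\|^2=\bigo(1)$ and $T\E_\theta\|\widehat{\theta}_T-\theta\|^{2+2\alpha}=o(1)$. The only difference is that you use the integral form of the Taylor remainder where the paper invokes a mean-value intermediate point $\widetilde{\theta}_T$; since $r$ is vector-valued, your integral form is in fact the cleaner justification of the same bound.
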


\begin{proof}
\[
r(x,\widehat{\theta}_T)-r(x,\theta)
= \big( \J_\theta r(x,\widetilde{\theta}_T) \big) (\widehat{\theta}_T-\theta),
\]
where $\widetilde{\theta}_T = \lambda \theta + (1-\lambda) \widehat{\theta}_T$ for some $\lambda\in[0,1]$. Let
\[
\delta_T(x,\,\theta)
= \big(\J_\theta r(x,\widetilde{\theta}_T) - \J_\theta r(x,\theta)\big)(\widehat{\theta}_T-\theta).
\]
Then
\[
r(x,\widehat{\theta}_T)-r(x,\theta)
= \big( \J_\theta r(x,\theta) \big) (\widehat{\theta}_T-\theta) + \delta_T(x,\theta).
\]
Thus
\[
\rho_T(\theta) = J_1 + J_2 + J_3,
\]
with
\begin{align*}
J_1 &= \int_E \big( \J_\theta r(x,\theta) \big) \E_\theta \big(\widehat{\theta}_T-\theta\big)^{\times 2}
\big( \J_\theta r(x,\theta) \big)'
\, d\mu_{\theta}(x),\\
J_2 &= \int_E \E_\theta \big(\delta_T(x,\theta)\delta_T'(x,\theta)\big)
\, d\mu_{\theta}(x),\\
J_3 &=
\int_E \E_\theta \big[ \big( \J_\theta r(x,\theta) \big) \big(\widehat{\theta}_T-\theta\big) \delta_T'(x,\theta) \\
& \quad \quad \quad \quad +
\delta_T(x,\theta) \big(\widehat{\theta}_T-\theta\big)'
\big( \J_\theta r(x,\theta) \big)' \big]
\, d\mu_{\theta}(x).
\end{align*}
Condition~\ref{condition_c_T_multi_d} of Assumption~\ref{hypo_EQR_limit_multi_d} implies
\begin{equation}\label{ineq_delta}
\|\delta_T(x,\theta)\|_{\R^k} \leqslant c(x) \|\widehat{\theta}_T-\theta\|_{\R^d}^{1+\alpha}.
\end{equation}
Hence
\[
\|J_2\|_{\M_k} \leqslant \E_\theta \big( \|\widehat{\theta}_T-\theta\|_{\R^d}^{2+2\alpha} \big) 
\|c\|^2_{\mu_{\theta}}.
\]
Then condition~\ref{condition_estim_conv_rate_multi_d} of Assumption~\ref{hypo_EQR_limit_multi_d} implies
\[
\|J_2\|_{\M_k} = o\Big(\frac{1}{T}\Big).
\]
Now,
\begin{align*}
\|J_3\|_{\M_k} \leqslant 2 \int
\big\| \J_\theta r(x,\theta) \big\|_{\M_{k,d}} \E_\theta \big[ \|\widehat{\theta}_T-\theta \|_{\R^d} \, \| \delta_T(x,\theta) \|_{\R^k} \big]
\, d\mu_{\theta}(x).
\end{align*}
Using (\ref{ineq_delta}) it ensues,
\begin{align*}
\|J_3\|_{\M_k} & \leqslant \, 2 \E_\theta \|\widehat{\theta}_T-\theta\|_{\R^d}^{2+\alpha}
\int_E |c(x)| \big\| \J_\theta r(x,\theta) \big\|_{\M_{k,d}} \, d\mu_{\theta}(x)\\
& \leqslant \, 2 \E_\theta \|\widehat{\theta}_T-\theta\|_{\R^d}^{2+\alpha}
\|c\|_{\mu_{\theta}}
\big\| \J_\theta r(\,\cdot\,,\theta) \big\|_{\mu_{\theta}}\\
& \leqslant \, 2 \E_\theta \left[ \|\widehat{\theta}_T-\theta\|_{\R^d} \|\widehat{\theta}_T-\theta\|_{\R^d}^{1+\alpha} \right]
\|c\|_{\mu_{\theta}}
\big\| \J_\theta r(\,\cdot\,,\theta) \big\|_{\mu_{\theta}}\\
& \leqslant \, 2 \left( \E_\theta \|\widehat{\theta}_T-\theta\|_{\R^d}^2 \right)^{\frac{1}{2}} \left( \E_\theta \|\widehat{\theta}_T-\theta\|_{\R^d}^{2+2\alpha} \right)^{\frac{1}{2}}
\|c\|_{\mu_{\theta}}
\big\| \J_\theta r(\,\cdot\,,\theta) \big\|_{\mu_{\theta}}\\
& \leqslant \bigo\left(\frac{1}{\sqrt{T}}\right) o\left(\frac{1}{\sqrt{T}}\right)
= o\left(\frac{1}{T}\right).
\end{align*}
Finally, from condition~\ref{condition_eqr_limit_multi_d} of Assumption~\ref{hypo_EQR_limit_multi_d}, 
$
T J_1 \underset{T\to \infty}{\longrightarrow} R(\theta).
$
\end{proof}

We deduce that under the conditions of Proposition~\ref{EQR_limit_multi_d}, Assumption~\ref{hypo_rho_T_S} is satisfied.

\begin{corollaire}
Under Assumption~\ref{hypo_EQR_limit_multi_d}, for all $\theta\in\Theta$,
\[
\lim_{T\to\infty} T \rho_T(\theta) = \lim_{T\to\infty} T \rho_T^S(\theta) = R(\theta).
\]
\end{corollaire}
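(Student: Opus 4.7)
The plan is to reduce the statement to Proposition~\ref{EQR_limit_multi_d} applied once to $\widehat{\theta}_T$ and once to $\widehat{\theta}_S$. First, the equality $\lim_{T\to\infty} T\rho_T(\theta) = R(\theta)$ is the direct conclusion of Proposition~\ref{EQR_limit_multi_d}, so nothing needs to be done for that half.

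For the second equality, I would first note that $\rho_T^S(\theta)$ is by definition the integral over $E$ of $\E_\theta(r(x,\widehat\theta_S) - r(x,\theta))^{\times 2}$ against $\mu_\theta$, and that since $\mu_\theta$ is assumed not to depend on $T$, this quantity equals $\rho_S(\theta)$, the QER one would obtain when using $\widehat\theta_S$ with sample horizon $S$. The point is that the $T$ in $\rho_T^S$ only enters via the horizon used for the estimator, not via the loss.

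Next I would verify that Assumption~\ref{hypo_EQR_limit_multi_d} passes from $\widehat{\theta}_T$ to $\widehat{\theta}_S$. Conditions \ref{condition_c_T_multi_d} only involves $r$ and is independent of the estimator. For the rate conditions, since $S = S(T) \sim T$, we have $S \E_\theta \|\widehat\theta_S-\theta\|^2_{\R^d} = \bigo(1)$ and $S \E_\theta \|\widehat\theta_S-\theta\|^{2+2\alpha}_{\R^d} = o(1)$ as $T\to\infty$ (equivalently as $S\to\infty$), so conditions~2 and~3 hold with $S$ in place of $T$. Finally condition~\ref{condition_eqr_limit_multi_d} transfers because the limit defining $R(\theta)$ is an asymptotic property that uses only the horizon going to infinity, and $S\to\infty$ as $T\to\infty$.

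Therefore Proposition~\ref{EQR_limit_multi_d}, applied with horizon $S$ instead of $T$, gives $\lim_{S\to\infty} S\,\rho_S(\theta) = R(\theta)$. Multiplying by $T/S\to 1$ yields
\[
\lim_{T\to\infty} T\,\rho_T^S(\theta) = \lim_{T\to\infty} \frac{T}{S}\,S\,\rho_S(\theta) = R(\theta),
\]
which completes the proof. There is essentially no serious obstacle here; the only point requiring a word of care is the identification $\rho_T^S(\theta) = \rho_S(\theta)$, which relies on the standing hypothesis that $\mu_\theta$ and $r$ do not depend on $T$.
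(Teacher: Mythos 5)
Your proof is correct and follows exactly the paper's own (much terser) argument: Assumption~\ref{hypo_EQR_limit_multi_d} transfers from $\widehat{\theta}_T$ to $\widehat{\theta}_S$ because $S\sim T$, so Proposition~\ref{EQR_limit_multi_d} applies at horizon $S$ and the factor $T/S\to 1$ finishes the job. The extra care you take in identifying $\rho_T^S(\theta)$ with $\rho_S(\theta)$ is a useful explicit check that the paper leaves implicit, but it is the same route.
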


\begin{proof}
The proof is similar to the univariate case. If $\widehat{\theta}_T$ satisfies Assumption~\ref{hypo_EQR_limit_multi_d} then $\widehat{\theta}_S$ also satifies this assumption because $S \sim T$. Hence
\[
\lim_{T\to\infty} T \rho_T^S(\theta) = R(\theta).
\]
\end{proof}

The asymptotic efficiency of a plug-in estimator or a plug-in predictor may comes down to the asymptotic efficiency of the estimator of the parameter, as in the univariate case. For this we have to make the following additional assumption.

\begin{hypo}\label{hypo_EQR_limit_multi_d_2}{\ }
\begin{enumerate}
\item\label{condition_c_T_multi_d_2}  $\exists\alpha\in(0,1]$, $\exists c:\R^k\to\R$ with $\|c\|_{\mu_{\theta}} < \infty$ such that
\[
\|\J_\theta r(x,\theta ')-\J_\theta r(x,\theta)\|_{\M_{k,d}} \leqslant c(x)\|\theta '-\theta\|_{\R^d}^\alpha,
\quad
x\in E,\;\theta,\,\theta ' \in\Theta.
\]
\item For all $\theta\in\Theta$, there is a measurable function $g_\theta : E\to\R_+$ such that $\int_E g_\theta(x) d\mu_\theta(x) < \infty$ and, for $\mu_\theta$-almost all $x$, for all $T\geqslant 0$,
\[
\| \J_\theta r(x,\theta) \|_{\M_{k,d}}^2 \leqslant g_\theta(x).
\]
\end{enumerate}
Moreover, $\widehat{\theta}_T$ is an estimator of $\theta$ such that, for all $\theta\in\Theta$,
\begin{enumerate}\setcounter{enumi}{2}
\item\label{condition_estim_limit_multi_d_2} $\exists V(\theta) \in\M_d$, such that $\lim_{T\to\infty} T\E_\theta (\widehat{\theta}_T - \theta)^{\times 2} = V(\theta)$.
\item\label{condition_estim_conv_rate_multi_d_2} $T\E_\theta\|\widehat{\theta}_T-\theta\|_{\R^d}^{2+2\alpha} = o(1)$.
\end{enumerate}
\end{hypo}

When Assumption~\ref{hypo_EQR_limit_multi_d_2} is satified, we let
\[
R(\theta) = \int_E \J_\theta r(x ,\theta) V(\theta) \big(\J_\theta r(x ,\theta)\big)' d\mu_\theta(x).
\]

\begin{proposition}\label{EQR_limit_multi_d_2}
Under Assumption~\ref{hypo_EQR_limit_multi_d_2}, for all $\theta\in\Theta$,
\[
\lim_{T\to\infty} T \rho_T(\theta) = \int_E \J_\theta r(x ,\theta) V(\theta) \big(\J_\theta r(x ,\theta)\big)'  d\mu_\theta(x)
\]
\end{proposition}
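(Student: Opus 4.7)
The plan is to deduce Proposition~\ref{EQR_limit_multi_d_2} from Proposition~\ref{EQR_limit_multi_d} by verifying each of the four conditions of Assumption~\ref{hypo_EQR_limit_multi_d} under the stronger Assumption~\ref{hypo_EQR_limit_multi_d_2}, and identifying the resulting matrix $R(\theta)$. Condition~1 of Assumption~\ref{hypo_EQR_limit_multi_d} is verbatim condition~1 of \ref{hypo_EQR_limit_multi_d_2}, and condition~3 coincides with its condition~4. Condition~2 follows from condition~3 of \ref{hypo_EQR_limit_multi_d_2}: convergence of $T\E_\theta(\widehat{\theta}_T - \theta)^{\times 2}$ to $V(\theta)$ in $\M_d$ entails
\[
T\E_\theta \|\widehat{\theta}_T - \theta\|_{\R^d}^2 = \trace\big( T\E_\theta(\widehat{\theta}_T - \theta)^{\times 2} \big) \xrightarrow[T\to\infty]{} \trace V(\theta),
\]
so the left-hand side is in particular $\bigo(1)$.

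The only nontrivial step is verifying condition~4 of Assumption~\ref{hypo_EQR_limit_multi_d}, namely the existence of
\[
\lim_{T\to\infty} T \int_E \big(\J_\theta r(x,\theta)\big) \E_\theta(\widehat{\theta}_T - \theta)^{\times 2} \big(\J_\theta r(x,\theta)\big)' d\mu_\theta(x).
\]
I would push the factor $T$ inside the expectation and apply dominated convergence (entry-wise, or equivalently in Frobenius norm) to the $\M_k$-valued integrand
\[
F_T(x) = \big(\J_\theta r(x,\theta)\big) \cdot T\E_\theta(\widehat{\theta}_T - \theta)^{\times 2} \cdot \big(\J_\theta r(x,\theta)\big)'.
\]
Pointwise in $x$, continuity of matrix multiplication together with condition~3 of \ref{hypo_EQR_limit_multi_d_2} gives $F_T(x) \to (\J_\theta r(x,\theta))\, V(\theta)\, (\J_\theta r(x,\theta))'$. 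For the domination, submultiplicativity of the Frobenius norm yields
\[
\|F_T(x)\|_{\M_k} \leqslant \|\J_\theta r(x,\theta)\|_{\M_{k,d}}^2 \cdot \|T\E_\theta(\widehat{\theta}_T - \theta)^{\times 2}\|_{\M_d} \leqslant M\, g_\theta(x),
\]
where $M = \sup_{T} \|T\E_\theta(\widehat{\theta}_T - \theta)^{\times 2}\|_{\M_d}$ is finite (the sequence converges in $\M_d$) and $g_\theta \in \L^1(\mu_\theta)$ by condition~2 of \ref{hypo_EQR_limit_multi_d_2}. Dominated convergence then gives condition~4 of \ref{hypo_EQR_limit_multi_d} with
\[
R(\theta) = \int_E \big(\J_\theta r(x,\theta)\big)\, V(\theta)\, \big(\J_\theta r(x,\theta)\big)' d\mu_\theta(x).
\]

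With all four conditions now in hand, Proposition~\ref{EQR_limit_multi_d} delivers $\lim_{T\to\infty} T\rho_T(\theta) = R(\theta)$, which is the desired identity. The only potentially delicate point is the interchange of limit and integral in the step above, but this is handled cleanly by the bound $g_\theta$ supplied by condition~2 of \ref{hypo_EQR_limit_multi_d_2} combined with the Frobenius-norm submultiplicativity; no new analytic work beyond that of Proposition~\ref{EQR_limit_multi_d} is required. As an alternative route one could replay directly the decomposition $\rho_T(\theta) = J_1 + J_2 + J_3$ from the proof of Proposition~\ref{EQR_limit_multi_d}, reusing verbatim the $o(1/T)$ bounds on $J_2$ and $J_3$, and apply dominated convergence only to the leading term $TJ_1$ with exactly the same domination.
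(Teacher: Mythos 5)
Your proposal is correct and follows essentially the same route as the paper: both reduce to Proposition~\ref{EQR_limit_multi_d} by verifying condition~\ref{condition_eqr_limit_multi_d} of Assumption~\ref{hypo_EQR_limit_multi_d} via dominated convergence, with the bound $\sup_T \|T\E_\theta(\widehat{\theta}_T-\theta)^{\times 2}\|_{\M_d}\, g_\theta(x)$ as dominating function (the paper phrases the convergence through quadratic forms $y'A_Ty$, you through the Frobenius norm, which is immaterial). Your explicit check of the remaining conditions of Assumption~\ref{hypo_EQR_limit_multi_d} is a small completeness bonus over the paper's version, which leaves that step implicit.
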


\begin{proof}
We are going to apply Lebesgue's dominated convergence theorem, as $T\to\infty$, to the integral
\begin{align*}
A_T &= T \int_E \big(\J_\theta r(x ,\theta)\big) \E_\theta(\widehat{\theta}_T-\theta)^{\times 2} \big(\J_\theta r(x ,\theta)\big)' d\mu_{\theta}(x).
\end{align*}
Let $y\in\R^k$, for $\mu_\theta$-almost all $x$,
\[
y' T \big(\J_\theta r(x ,\theta)\big) \E_\theta(\widehat{\theta}_T-\theta)^{\times 2} \big(\J_\theta r(x ,\theta)\big)' y
\xrightarrow[T\to\infty]{} \J_\theta r(x,\theta) V(\theta) \big(\J_\theta r(x ,\theta) \big)'.
\]
From condition~\ref{condition_estim_limit_multi_d_2} of Assumption~\ref{hypo_EQR_limit_multi_d_2}, $T\E_\theta (\widehat{\theta}_T - \theta)^{\times 2}$ converges as $T\to\infty$ hence it is bounded. Let
\[
C = \sup_{T\geqslant 0} \| T \E_\theta (\widehat{\theta}_T - \theta)^{\times 2} \|_{\M_d},
\]
then, for $\mu_\theta$-almost all $x$,
\[
| y' T \big(\J_\theta r(x ,\theta)\big) \E_\theta(\widehat{\theta}_T-\theta)^{\times 2} \big(\J_\theta r(x ,\theta)\big)' y |
\leqslant \|y\|_{\R^k}^2 C g_\theta(x).
\]
Yet $\int_E g_\theta(x) d\mu_\theta(x) < \infty$, hence Lebesgue's dominated convergence theorem implies
\[
\lim_{T\to\infty} y' A_T y = y' \int_E \J_\theta r(x ,\theta) V(\theta) \big(\J_\theta r(x ,\theta)\big)'  d\mu_\theta(x) y.
\]
This is true for all $y\in\R^k$ hence
\[
\lim_{T\to\infty} A_T = \int_E \J_\theta r(x ,\theta) V(\theta) \J_\theta r(x ,\theta)\big)'  d\mu_\theta(x).
\]
So condition~\ref{condition_eqr_limit_multi_d} of Assumption~\ref{hypo_EQR_limit_multi_d} is fulfilled with
\[
R(\theta) = \int_E \J_\theta r(x ,\theta) V(\theta) \big(\J_\theta r(x ,\theta)\big)' d\mu_\theta(x).
\]
Now applying Proposition~\ref{EQR_limit_multi_d}, we deduce the result.
\end{proof}

\begin{remarque}
When Proposition~\ref{EQR_limit_multi_d_2} applies, the asymptotic efficiency of a plug-in estimator $r(\cdot,\widehat{\theta}_T)$, or a plug-in predictor $r(X_T,\widehat{\theta}_T)$, comes down to the asymptotic efficiency of the estimator $\widehat{\theta}_T$.
More precisely, if Assumption~\ref{hypo_EQR_limit} is satisfied, the family is LAN and $I(\theta)$ is the asymptotic Fisher information, then $r(\cdot,\widehat{\theta}_T)$ is asymptotically efficient iff
\[
\lim_{T\to\infty} T \E_\theta (\widehat{\theta}_T - \theta)^{\times 2} = I(\theta)^{-1}.
\]

If in addition Assumptions~\ref{hypo_beta_tilde} and \ref{hypo_theta_T_theta_S} are satisfied, then
\[
\lim_{T\to\infty} T R_T(\theta) = R(\theta),
\]
and a plug-in predictor $r(X_T,\widehat{\theta}_T)$ is asymptotically efficient iff
\[
\lim_{T\to\infty} T \E_\theta (\widehat{\theta}_T - \theta)^{\times 2} = I(\theta)^{-1}.
\]
\end{remarque}

\subsubsection{Bivariate Ornstein-Uhlenbeck process}

We are going to see that Assumptions~\ref{hypo_EQR_limit_multi_d} and \ref{hypo_EQR_limit_multi_d_2} are satisfied for the problem of forecasting of the bivariate stationary Ornstein-Uhlenbeck process seen earlier.

Let $\theta,\,\theta'\in\Theta$, and
\[
M(x) = \begin{pmatrix} x_1 & x_2 \\ x_2 & x_1 \end{pmatrix},
\]
then
\begin{align*}
\|\J_\theta r(x,\theta')-\J_\theta r(x,\theta)\|_{\M_2}^2
&= h^2 \| (e^{-hQ(\theta')} - e^{-hQ(\theta)}) M(x) \|_{\M_2}^2\\
&= h^2 \Big( 
\| (e^{-hQ(\theta')} - e^{-hQ(\theta)}) x \|_{\R^2}^2\\
&\quad+ \| (e^{-hQ(\theta')} - e^{-hQ(\theta)}) A x \|_{\R^2}^2
\Big)\\
&= h^2 \big( \| r(x,\theta') - r(x,\theta) \|_{\R^2}^2\\
& \quad + \| r(Ax,\theta') - r(Ax,\theta) \|_{\R^2}^2 \big)\\
& \leqslant h^2 \Big( 
\| \J_\theta r(x,\theta_1) \|_{\M_2}^2\\
& \quad \quad \quad + \| \J_\theta r(Ax,\theta_2) \|_{\M_2}^2 \Big) \| \theta' - \theta \|_{\R^2}^2
\end{align*}
with $\theta_1 = \lambda_1 \theta + (1 - \lambda_1) \theta^*$ and $\theta_2 = \lambda_2 \theta + (1 - \lambda_2) \theta^*$ and $\lambda_1,\,\lambda_2\in[0,1]$. Let
\[
c(x) = \sqrt{ \| \J_\theta r(x,\theta_1) \|_{\M_2}^2 + \| \J_\theta r(Ax,\theta_2) \|_{\M_2}^2 }.
\]
Then
\begin{align*}
\|c\|_{\mu_{\theta}}^2 
& =
\big\| \J_\theta r(\,\cdot\,,\theta_1) \big\|_{\mu_{\theta}}^2
+ \big\| \J_\theta r(\,\cdot\,,\theta_2) \big\|_{\mu_{\theta}}^2 \\
& = h^2 \, \left(
\trace \left( Q(\theta)^{-1} e^{-2hQ(\theta_1)} \right)
+ \trace \left( Q(\theta)^{-1} e^{-2hQ(\theta_2)} \right)
\right)
< \infty
\end{align*}
Hence condition~\ref{condition_c_T_multi_d} of Assumption~\ref{hypo_EQR_limit_multi_d} is fulfilled with $\alpha=1$. The other conditions of Assumption~\ref{hypo_EQR_limit_multi_d} are verified applying Theorem~2.8 p.121 \cite{kutoyants2004}. Condition~2 of Assumption~\ref{hypo_EQR_limit_multi_d_2} is fulfilled because $\mu_{\theta}$ and $r$ do not depend on $T$ and $\E_\theta \left\| \J_\theta r(X_T,\theta) \right\|_{\M_2} < \infty$. Condition~3 is fulfilled because $\lim_{T\to\infty} T \E_\theta (\widehat{\theta}_T - \theta)^{\times 2} = I(\theta)^{-1}$ (Theorem~2.8 p.121 of \cite{kutoyants2004}). The other conditions of Assumptions~\ref{hypo_EQR_limit_multi_d_2} are shared with Assumption~\ref{hypo_EQR_limit_multi_d}. So we can apply Theorems~\ref{EQR_limit_multi_d} and \ref{EQR_limit_multi_d_2} to this problem, and we find again
\[
\lim_{T\to\infty} T \rho_T(\theta) = \E_\theta \left( U I(\theta)^{-1} U' \right),
\]
with $U = \J_\theta r(X_0,\theta)$.

\section*{Acknowledgements}
I wish to thank Professors Denis Bosq and Delphine Blanke for their helpful comments during the preparation of this paper. I thank an anonymous referee for helpful comments that made possible to improve the article.

\bibliographystyle{imsart-nameyear}
\bibliography{../../Biblio}

\end{document}